\DeclareMathOperator{\Hom}{Hom}
\theoremstyle{plain}
\newtheorem{lemma}{Lemma}[section]
\newtheorem{theorem}[lemma]{Theorem}
\newtheorem{prop}[lemma]{Proposition}
\newtheorem{corollary}[lemma]{Corollary}
\theoremstyle{definition}
\newtheorem{example}[lemma]{Example}
\newtheorem{definition}[lemma]{Definition}
\def\d{$\displaystyle}
\def\be{\begin{equation}}
\begin{document}

\title{Reciprocity laws for representations of finite groups}

\author
{Sunil Chebolu \\ schebol@ilstu.edu \\
Dept. of Mathematics, Illinois State University, Normal, IL, USA. \footnote{Supported in part by NFIG (New Faculty Initiative Grant) at ISU}
\and
J\'{a}n Min\'{a}\v{c} \\ minac@uwo.ca  \\
Dept. of Mathematics, Univ. of Western Ontario, London, ON, Canada.\footnote{Supported in part by NSERC grant \#0370A01}
\and
Clive Reis \\ c.m.reis@shaw.ca \\
222 B Saint Charles St., Victoria, BC, Canada.
}

%\author{Sunil Chebolu\footnote{Department of Mathematics, Illinois State University, Normal, IL 61761, USA, schebol@ilstu.edu (Supported in part by NFIG (New Faculty Initiative Grant) at ISU)},  J\'{a}n Min\'{a}\v{c}\footnote{Department of Mathematics, University of Western Ontario, London, ON N6A 2B6, Canada, minac@uwo.ca (Supported in part by NSERC grant \#0370A01)}, and Clive Reis \footnote{222 B Saint Charles St., Victoria, BC.  V8S 3M7, c.m.reis@shaw.ca} }

\date{\emph{Dedicated to Professor Paulo Ribenboim who inspired us to
read the masters and to dream.}
 }
\maketitle

%\author{Clive Reis\thanks{\ Supported in part by NSERC grant \#S174A3}}
% Supported in part by NSERC grant \#0370A01

\begin{abstract}

Much has been written on reciprocity laws in number theory and their
connections with group representations. In this paper we explore more on these
connections.  We prove a ``reciprocity Law" for certain specific
representations of semidirect products of two cyclic groups which is in
complete analogy with classical reciprocity laws in number theory.  In fact,
we show that the celebrated quadratic reciprocity law is a direct consequence
of our main theorem applied to a specific group.   As another consequence of
our main theorem we also recover a classical theorem of Sylvester. Our main
focus is on explicit constructions of representations over  sufficiently small
fields. These investigations give further evidence that there is still much
unexplored territory in connections between number theory and group
representations, even at an elementary level.
%  Indeed, with the
%exception of section 4, in our paper we use only some very basic notions
%from representation theory, number theory and linear algebra.
\end{abstract}

\section{Introduction}
%%%%%%%%%%% put the text of the intro here

Let $p$ and $s$ be odd primes.  Then the quadratic reciprocity law tells
us how to find all finite fields $\mathbb{F}_s$ of $s$ elements for which
$\sqrt{p} \in \mathbb{F}_s$.  (To anticipate the generalization we have in
mind, we might say that $\sqrt{p}$ is {\it realizable} over $\mathbb{F}_s$.)
Remarkably, whether or not $\sqrt{p}$ is realizable over $\mathbb{F}_s$ can
be decided mod $4p$ thus reducing a question concerning infinitely many
fields to one which can be decided using only finitely many operations,
namely, squarings.  (See [Gau] or [Ser].)

It is known that reciprocity laws are intimately connected with
representations of finite groups (see for example [Art], [Lan], [Tat].)
However, we were unable to find in the literature a development of reciprocity
laws for representations of finite groups themselves. In a recent preprint
\cite{BH} an extremely interesting connection between division algebras with
involutions and automorphism groups associated with Shimura varieties is
uncovered and used. Certain constructions employed in this paper have a
similar flavor as in  \cite{BH}. Also in \cite{Lem} the field of
definition of some representations of finite groups was studied in
order to deduce statements about the ranks of class groups.

The main goal of this paper is to provide reciprocity laws for certain
representations of a restricted family of metacyclic groups.  More
precisely, we define certain specific representations $\rho_s : G\to Aut
V_s$ where $G$ is a fixed metacyclic group and $V_s$ is a vector space
over $\overline{\mathbb{F}}_s$, the algebraic closure of $\mathbb{F}_s$.  We
then show that the question of the realizability of $\rho_s$ over a given
finite extension field of $\mathbb{F}_s$ can be decided entirely in terms of
the invariants of $G$ and depends only on a finite number of
computations.  This provides a straightforward analogy to the classical
reciprocity laws.  Indeed, in section 3 below, it will be seen that a
judicious choice of group $G$ and corresponding representation
$\rho(G)$ yields the usual quadratic reciprocity law.

The connection between the representation $\rho(G)$ and the quadratic
reciprocity law was discovered by D. R. Corro (see [Jac], pp. 320-325).
However this is used only to evaluate the square of Gauss's sum
which is only part of the proof.  To complete that part of the proof in
which the reduction to a finite number of primes is achieved, a
classical method due to Jacobi is used.  This part is not difficult and
is actually worked out in a simple way in [Ser 2].  However the
evaluation of the square of Gauss's sum though important, does not, on
its own, accomplish the reduction from infinitely many to finitely many
primes.

Some of the results we obtain could be arrived at using the notion of
the Schur index of a representation which is always $1$ when the field
in question is finite and of characteristic coprime to $|G|$ (see
[Dor]).  However our approach has the merit of being quite elementary
and, more importantly, constructive.  Except for section 4, the basic
notions of representation theory as can be found in [Ser 1] are more
than enough.  For section 4 we refer the reader to [Rei].  All other
background material concerning finite fields, Vandermonde and companion
matrices and characteristic polynomials should be understandable to a
good advanced undergraduate.

In this paper we believe that we have merely scratched the surface of a
possibly rather general theory of reciprocity in the representations of
finite groups. The style of this paper is influenced by Paulo Ribenboim's writing, and after his urging to read Euler, also writing of Euler which are full of examples,  ``naive questions'' and exploration spirit.

\section{Main Results -- Galois, Vandermonde and field of definition}

Throughout we shall be concerned with split semidirect products of two cyclic
groups.  Thus in terms of generators and relations $G = \langle a,b \mid
a^m = 1 = b^n; b^{-1}ab = a^k\rangle$, where the order of $k$ (mod $m$) divides $n$.  Let $s$ be an odd prime
relatively prime to $m$ and let $\zeta$ be a primitive $m^{th}$ root of
unity in $\overline{\mathbb{F}}_s$, the algebraic closure of the field
$\mathbb{F}_s$ of $s$ elements.  We shall focus attention on the
representation $\rho^G$ induced from the representation $\rho :\langle
a\rangle \to \overline{\mathbb{F}}_s^*$ defined by $\rho(a) = \zeta$.
We first prove a simple proposition which, among other things, tells us
under what circumstances $\rho^G$ is irreducible over
$\overline{\mathbb{F}}_s$.

\begin{prop}
(1)  Let $G,\rho$ and $\rho^G$ be as above.  Then $\rho^G$ is
irreducible over $\overline{\mathbb{F}}_s$ iff $|k|_m$, the order
of $k$ in the multiplicative group of units mod $m$, is equal to the
order of $b$.\\
(2)  If $|k|_m = t\not= n, n = tr$ and $(r,s) = 1$, then
$\rho^G\approx\rho_0\dot{+}\rho_1\dot{+}...\dot{+}\rho_{r-1}$ where the
$\rho_i$ are irreducible pairwise inequivalent representations over
$\overline{\mathbb{F}}_s$ and, relative to an appropriate basis
$\mathscr{B}_i$,
\[ [\rho_i(a)]_{\mathscr{B}_i} = \left( \begin{array}{cccc}
\zeta & & & 0 \\
& \zeta^k \\
& & \ddots & \\
0 & & & \zeta^{k^{t-1}} \end{array}\right);\quad [\rho_i(b)]_{\mathscr{B}_i} =
\left( \begin{array}{cccccc}
0 & 0 & 0 & \cdots & 0 & \eta_i^{-1} \\
1 & 0 & 0 & \cdots & 0 & 0 \\
0 & 1 & 0 & \cdots & 0 & 0 \\
\vdots & \vdots & \vdots & & \vdots & \vdots \\
0 & 0 & 0 & \cdots & 1 & 0  \end{array} \right) \]
Here $\eta_i = \eta^i, i = 0,1,\dots, r-1$ where $\eta$ is a primitive
$r^{th}$ root of unity in $\overline{\mathbb{F}}_s$.
\end{prop}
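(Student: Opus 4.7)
The plan is to realize $\rho^G$ concretely on an $n$-dimensional space. Take the basis $\{e_0, \ldots, e_{n-1}\}$ with $e_i = b^i \otimes 1$ arising from the coset representatives $1, b, \ldots, b^{n-1}$ of $\langle a \rangle$ in $G$. Using the relation $a b^l = b^l a^{k^l}$ (a direct consequence of $b^{-1} a b = a^k$), a short computation shows that $\rho^G(a) e_i = \zeta^{k^i} e_i$ and $\rho^G(b) e_i = e_{i+1 \bmod n}$. Thus $\rho^G(a)$ is diagonal with entries $\zeta, \zeta^k, \ldots, \zeta^{k^{n-1}}$, while $\rho^G(b)$ is the cyclic shift of order $n$.

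For part (1), if $|k|_m = n$ the diagonal entries of $\rho^G(a)$ are pairwise distinct, so every $a$-stable subspace is a direct sum of the lines $\langle e_i \rangle$; since $\rho^G(b)$ cycles these lines transitively, only $\{0\}$ and $V$ are $G$-stable. Conversely, when $|k|_m = t < n$, the $t$ vectors $w_j := \sum_{l=0}^{r-1} e_{j+lt}$ for $j = 0, \ldots, t-1$ (where $r = n/t$) span a proper nonzero $G$-invariant subspace: each $w_j$ is an $a$-eigenvector of eigenvalue $\zeta^{k^j}$ (using $k^{j+lt} \equiv k^j \pmod m$), and $b$ cyclically permutes the $w_j$'s. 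This proves reducibility.

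For part (2), set $t = |k|_m$, $r = n/t$. The $a$-eigenspace for $\zeta^{k^j}$ is $V_j = \mathrm{span}\{e_{j+lt} : 0 \le l < r\}$, and $b^t$ preserves $V_0$, acting there as the cyclic shift $e_{lt} \mapsto e_{(l+1)t \bmod n}$. Because $(r, s) = 1$, the polynomial $x^r - 1$ has $r$ distinct roots in $\overline{\mathbb{F}}_s$, so this shift is diagonalizable, with eigenvectors $f_i^{(0)} := \sum_{j=0}^{r-1} \eta^{ij} e_{jt}$ and eigenvalues $\eta^{-i} = \eta_i^{-1}$ for $i = 0, \ldots, r-1$. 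Extend by setting $f_i^{(l)} := b^l f_i^{(0)}$ and $\mathscr{B}_i = \{f_i^{(0)}, \ldots, f_i^{(t-1)}\}$, $W_i = \mathrm{span}\,\mathscr{B}_i$.

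To finish, I would verify that each $W_i$ is $G$-stable with matrices exactly as stated. The $b$-action is immediate, except at the top corner $b f_i^{(t-1)} = b^t f_i^{(0)} = \eta_i^{-1} f_i^{(0)}$; for $a$, the identity $a f_i^{(l)} = b^l a^{k^l} f_i^{(0)} = \zeta^{k^l} f_i^{(l)}$ uses $a b^l = b^l a^{k^l}$ together with $f_i^{(0)} \in V_0$. Irreducibility of each $W_i$ follows from the same eigenvalue-permutation argument as in (1), now applied to $a|_{W_i}$, which has $t$ distinct eigenvalues. Pairwise inequivalence is forced by the observation that the central element $b^t$ acts on $W_i$ as the scalar $\eta_i^{-1}$, and these scalars are distinct; this also shows the $W_i$ are linearly independent, so the dimension count $rt = n$ yields $V = \bigoplus_i W_i$. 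The main obstacle is the construction of this simultaneous eigenbasis; the hypothesis $(r, s) = 1$ is essential, for otherwise $b^t|_{V_0}$ fails to be semisimple in characteristic $s$ and the clean decomposition into $r$ pairwise inequivalent irreducible summands cannot exist.
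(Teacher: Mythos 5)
Your proof is correct and uses exactly the same decomposition as the paper (your $f_i^{(l)}$ are the paper's $v_{il} = \sum_{\ell} \eta^{i\ell} e_{l+\ell t}$), but you replace three of the paper's verification steps with cleaner, more structural ones. For irreducibility when $|k|_m = n$, the paper picks a nonzero $f\in W$ with maximal ``leading index'' $v$ and derives a contradiction; you instead note that a diagonal operator with distinct eigenvalues forces any stable subspace to be a sum of eigenlines, which $b$ permutes transitively --- a more standard and transparent argument. For linear independence of the $v_{ij}$, the paper computes a Vandermonde determinant; you observe that $b^t$ is central, acts on $W_i$ by the scalar $\eta^{-i}$, and that eigenspaces for distinct eigenvalues of a single operator are independent. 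The same observation simultaneously gives the pairwise inequivalence of the $\rho_i$, where the paper instead compares characteristic polynomials of $\rho_i(b)$. Your use of the central element $b^t$ thus unifies two separate checks in the paper; the Vandermonde computation in the paper is slightly more self-contained (it does not presuppose the general eigenspace-independence fact) but otherwise buys nothing extra. Both proofs correctly isolate where $(r,s)=1$ is used (existence of $r$ distinct $r$th roots of unity), and both give the direction ``$|k|_m < n\Rightarrow$ reducible'' by exhibiting $W_0$, which needs no hypothesis on $(r,s)$.
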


\begin{proof}
Since $a = b^{-n}ab^n = a^{k^n}$, it follows that $k^n\equiv 1(m)$ and so
$|k|_m\mid n$.  Let $M = \overline{\mathbb{F}}_s$ be the
representation space of $\rho$ and let $H = \langle a\rangle$.  Then the
representation space of $\rho^G$ is $V = \overline{\mathbb{F}}_s
G\otimes_{\overline{\mathbb{F}}_sH}M$.  The set $\{b^i\otimes 1\mid i =
0,1,..., n-1\}$ is a basis of $V$. Setting $e_i = b^i\otimes 1, i =
0,..., n-1$, we have $be_i = e_{i+1}$ where the indices are taken
modulo $n$, and $ae_i = ab^i\otimes 1 = b^i\otimes(a^{k^{i}}\cdot 1) =
\zeta^{k^i}{e_i}$.

We prove the sufficiency of (1) first.  Assume therefore that
$|k|_m = n$.  Then $\{\zeta,\zeta^k,..., \zeta^{k^{n-1}}\}$ is a set of
$n$ distinct elements.  Let $\zeta_i = \zeta^{k^{i}}$ and let $W$ be a
nonzero $G$-invariant subspace of $V$.  Let $f = c_ve_v + \cdots +
c_{n-1}e_{n-1}, c_v \not= 0$ be a nonzero vector of $W$ such that $v$ is
largest subject to $c_v\not= 0$ and $c_0 = c_1 = \cdots = c_{v-1} =
0$.  Then $c_{n-1}\not= 0$, otherwise $bf = c_ve_{v+1} +\cdots +
c_{n-2}e_{n-1}\in W - \{0\}$, contradicting maximality of $v$.  Now
\d \zeta_vf = \sum_{j=v}^{n-1} \zeta_vc_je_j$ and \d af =
\sum_{j=v}^{n-1} \zeta_jc_je_j$ and assume $v < n-1$.  Then \d af -
\zeta_vf = \sum_{j=v+1}^{n-1} c_j(\zeta_j - \zeta_v)e_j$ is a nonzero
vector of $W$ since $c_{n-1}(\zeta_{n-1} - \zeta_v)\not= 0$,
contradicting maximality of $v$.  Thus $v = n - 1$ and so $e_{n-1}\in
W$.  It follows that $b^ie_{n-1}\in W$ for all $i$ and so $W = V$,
proving the irreducibility of $V$.

Next we prove (2) and indicate that the existence of one of the direct
summands constructed does not depend on the existence of a primitive
$r^{th}$ root of unity in $\overline{\mathbb{F}}_s$.  This will then
also prove the necessity of (1).

Let $\mathscr{B} = \{e_0,e_1,..., e_{n-1}\}$ where the $e_i$ are defined
above.  Then
\[ [\rho^G(a)]_\mathscr{B} = \left( \begin{array}{cccc}
A & & & 0 \\
& A & &  \\
& & \ddots & \\
0 & & & A \end{array}\right) \]

where \[ A = \left( \begin{array}{cccc}
\zeta_0 & & & 0 \\
& \zeta_1 & & \\
& & \ddots & \\
0 & & & \zeta_{t-1} \end{array}\right) \mbox{ and there are $r A$'s;} \]

\[ [\rho^G(b)]_\mathscr{B} =
\left( \begin{array}{ccccc}
0 & 0 & \cdots & 0 & 1 \\
1 & 0 & \cdots & 0 & 0 \\
0 & 1 & \cdots & \vdots & \vdots \\
\vdots & \vdots & & 0 & 0 \\
0 & 0 & \cdots & 1 & 0  \end{array} \right) \quad . \]

Let $\eta$ be a primitive $r^{th}$ root of unity in
$\overline{\mathbb{F}}_s$ and let \d v_{ij} = \sum^{r-1}_{\ell =0}
\eta^{i\ell}e_{j+\ell t}, 0\leq i\leq r-1, 0\leq j\leq t-1$ and set
\[ W_i = {\it span}\;\mathscr{B}_i, \quad \mathscr{B}_i = \{v_{i0},v_{i1},\dots
, v_{i,t-1}\}\; . \]

\noindent Now \quad \d \begin{array}[t]{l} av_{ij} = \zeta_jv_{ij}\; {\rm while} \\
bv_{ij} = v_{i,j+1}\quad{\rm if}\quad 0\leq j\leq t - 2 \\
\end{array}$ \\ and
\begin{eqnarray*}
bv_{i,t-1} & = & \sum^{r-1}_{\ell =0} \eta^{i\ell}be_{t-1+\ell t} \\
& = & \sum^{r-1}_{\ell =0} \eta^{i\ell}e_{t(\ell +1)} \\
& = & \eta^{-i}\sum^{r-1}_{\ell =0} \eta^{i(\ell +1)}e_{t(\ell +1)} \\
& = & \eta^{-i}v_{i0}\; .
\end{eqnarray*}

It follows that $W_i$ is a $G$-invariant subspace of $V$ and the matrix
representation of the restriction $\rho_i$ of $\rho^G$ to $W_i$ relative
to the basis $\mathscr{B}_i$ is given by:
\[ [\rho_i(a))]_{\mathscr{B}_i} = \left( \begin{array}{cccc}
\zeta_0 & & & 0 \\
& \zeta_1 & & \\
& & \ddots & \\
0 & & & \zeta_{t-1} \end{array}\right) \]

\[ [\rho_i(b)]_{\mathscr{B}_i} =
\left( \begin{array}{ccccc}
0 & 0 & \cdots & 0 & \eta^{-i} \\
1 & 0 & \cdots & 0 & 0 \\
0 & 1 & \cdots & \vdots & \vdots \\
\vdots & \vdots & & 0 & 0 \\
0 & 0 & \cdots & 1 & 0  \end{array} \right) \quad . \]

There are $n$ vectors in $\cup \mathscr{B}_i$ and so, if we can prove
they are linearly independent we shall have the decomposition $V = W_0
\oplus W_1 \oplus \cdots \oplus W_{r-1}$ into $G$-spaces.

Let \d \sum_{i,j} \alpha_{ij}v_{ij}$.  Then

\[ \sum_{i,j,\ell} \alpha_{ij}\eta^{i\ell}e_{j+\ell t} = 0\;. \]

Fix $j$ and $\ell$.  We have

\[ \sum^{r-1}_{i=0} \alpha_{ij}(\eta^\ell)^i = 0 \quad \mbox{for all }
j,\ell \; . \]

Fixing $j$ and letting $\ell$ vary, we obtain a system of $r$ equations
in $r$ unknowns with matrix of coefficients the Vandermonde matrix

\[ \left( \begin{array}{cccc}
1 & 1 & \cdots & 1 \\
1 & \eta & \cdots & \eta^{r-1} \\
1 & \eta^2 & \cdots & \eta^{2(r-1)} \\
\vdots & \vdots & & \vdots \\
1 & \eta^{r-1} & \cdots & \eta^{(r-1)(r-1)}  \end{array} \right) \quad . \]

Since $\eta$ is a primitive $r^{th}$ root of unity, this matrix is
nonsingular whence $\alpha_{ij} = 0$ for all $i$.  Letting $j$ vary we
obtain $\alpha_{ij} = 0$ for all $i$ and $j$.  Observe that the
$\overline{\mathbb{F}}_sG$-module $W_0$ exists whether or not $(r,s) = 1$
and so the necessity of (1) is proved.

The irreducibility of each $W_i$ is proved in a manner entirely similar
to that used to prove the sufficiency of (1).

Finally, we observe that the characteristic polynomial of $\rho_i(b)$ is
$X^t - \eta^{-i}$ whence the elements $\rho_0,\rho_1,\dots \rho_{r-1}$ are mutually
inequivalent.
\end{proof}
Using the notation established in the foregoing, we now proceed by a
series of lemmas to prove our main theorem.

\begin{lemma}
Let $\zeta$ be a primitive $m^{th}$ root of unity in $\overline{\mathbb{F}}_s$
and let $k$ be a positive integer with $|k|_m = n$.  Then the Frobenius
automorphism $\tau$ on $\overline{\mathbb{F}}_s$ defined by $\tau (x) = x^q$
where $q$ is a power of $s$ permutes the elements of $\mathscr{P} = \{\zeta
,\zeta^k,\dots ,\zeta^{k^{n-1}}\}$ iff $q\equiv k^i(m)$ for some $i, 0\leq
i\leq n-1$.
\end{lemma}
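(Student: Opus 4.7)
The plan is to prove both directions directly from the definitions; the lemma is essentially a restatement of what it means for a power map to stabilize an orbit of powers of $\zeta$.

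For the ``if'' direction, I would assume $q \equiv k^i \pmod{m}$ for some $i$ with $0 \leq i \leq n-1$. Then for any $\zeta^{k^j} \in \mathscr{P}$, one computes
\[
\tau(\zeta^{k^j}) = \zeta^{q k^j} = \zeta^{k^i \cdot k^j} = \zeta^{k^{i+j}},
\]
where the second equality uses that $\zeta$ has order $m$ together with $q \equiv k^i \pmod m$. Since $|k|_m = n$, the exponent $k^{i+j}$ is determined modulo $m$ by $i+j$ modulo $n$, so $\zeta^{k^{i+j}} \in \mathscr{P}$. Thus $\tau$ maps $\mathscr{P}$ into itself; being injective on a finite set, it permutes $\mathscr{P}$.

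For the ``only if'' direction, I would look at what $\tau$ does to the single element $\zeta \in \mathscr{P}$. If $\tau$ permutes $\mathscr{P}$, then $\tau(\zeta) = \zeta^q$ must equal some $\zeta^{k^i}$ with $0 \leq i \leq n-1$. Hence $\zeta^{q - k^i} = 1$, which forces $m \mid q - k^i$ since $\zeta$ has order $m$. This is exactly $q \equiv k^i \pmod m$.

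There is no real obstacle here: the content of the lemma is just that the Frobenius acts on $\mathscr{P}$ exactly when its effect on the generator $\zeta$ lies in $\mathscr{P}$, and this is forced by the multiplicative structure of $\mathscr{P}$ as the orbit $\{\zeta^{k^i}\}$. The only thing to keep track of is that indices on $k^i$ may be reduced modulo $n = |k|_m$ without changing $\zeta^{k^i}$, which is what lets us always take $0 \leq i \leq n-1$.
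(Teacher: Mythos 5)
Your proof is correct and matches the paper's own argument: both directions unwind the definition of $\tau$ on the orbit $\mathscr{P}$, using $\tau(\zeta)\in\mathscr{P}$ to force $q\equiv k^i\pmod m$ and the converse to show $\tau$ maps $\mathscr{P}$ into itself. The paper's proof is a terser version of exactly this.
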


\begin{proof}
If $\tau$ permutes the elements of $\mathscr{P}$, then
$\zeta^q\in\mathscr{P}$ and so $\zeta^q = \zeta^{k^i}$ for some $i,\;
0\leq i\leq n-1$.  Hence $q\equiv k^i(m)$.

Conversely, if $q\equiv k^i(m)$ then $(\zeta^{k^j})^q =
\zeta^{k^{i+j}}\in \mathscr{P}$ whence $\tau$ permutes the elements of
$\mathscr{P}$.
\end{proof}

The following is the cornerstone of our main result:

\begin{lemma}
Let $K$ be a field and let $\alpha$ be an automorphism of $K$ with fixed
field $F$.  Let $a_0,a_1,\dots, a_{n-1}$ be elements of $K$ which are
cyclically permuted by $\alpha$, say $\alpha (a_i) = a_{i+1}$ where the
indices are taken modulo $n$.  Let \d f(x) = \prod^{n-1}_{i = 0} (x-a_i)\in
F[x]$ and let $C = (c_{ij})$ be the $n\times n$ matrix with $c_{i,i-1} =
1$ if $2\leq i\leq n$; $c_{1n} = 1$ and $c_{ij} = 0$ otherwise.  Let

\[ V =
\left( \begin{array}{ccccc}
1 & a_0 & a^2_0 & \cdots & a^{n-1}_0 \\
1 & a_1 & a^2_1 & \cdots & a^{n-1}_1 \\
\vdots & & & & \vdots \\
1 & a_{n-1} & a^2_{n-1} & \cdots & a^{n-1}_{n-1} \end{array}\right) \; ,
\mbox{\it the } n\times n \mbox{\it Vandermonde matrix.$\;$} \] Then
$V^{-1}CV\in  M_n(F)$. (Here $M_{n}(F)$ denotes the algebra of $n \times n$
matrices over $F$.)
\end{lemma}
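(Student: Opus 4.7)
The plan is to show that $\alpha$ fixes the matrix $V^{-1}CV$ entrywise; since $F$ is the fixed field of $\alpha$, this immediately puts every entry into $F$. First, observe that a cyclic permutation of length $n$ forces the $a_i$ to be pairwise distinct (any repetition would shorten the $\alpha$-orbit), so the Vandermonde matrix $V$ is invertible and the statement is at least well-posed.

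The heart of the argument is the single identity $\alpha(V) = C^{-1}V$, where $\alpha$ is applied entrywise. Indeed, with $V_{ij} = a_{i-1}^{j-1}$ (1-indexed entries), applying $\alpha$ gives
\[ \alpha(V)_{ij} = \alpha(a_{i-1})^{j-1} = a_i^{j-1}, \]
with indices read modulo $n$. So $\alpha$ shifts every row of $V$ up by one cyclically. On the other hand, from the definition of $C$ we have $Ce_j = e_{j+1}$ (indices mod $n$) on the standard basis, so $C^{-1} = C^T$ implements $e_j \mapsto e_{j-1}$, and left multiplication by $C^{-1}$ on $V$ produces precisely this upward cyclic shift of rows. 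Equivalently, $\alpha(V)^{-1} = V^{-1}C$.

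With that identity in hand, the conclusion is a one-line computation. Since $\alpha$ is a ring automorphism and the entries of $C$ are $0$ and $1$ (so $\alpha(C) = C$),
\[ \alpha(V^{-1}CV) = \alpha(V)^{-1}\, C\, \alpha(V) = (C^{-1}V)^{-1}\, C\, (C^{-1}V) = V^{-1}C\cdot C\cdot C^{-1}V = V^{-1}CV. \]
Hence every entry of $V^{-1}CV$ is $\alpha$-invariant and therefore belongs to $F$, as claimed. The only step that requires real care is the bookkeeping in identifying the direction of the row shift and matching it to $C^{-1}$ rather than $C$; once the matrix identity $\alpha(V)=C^{-1}V$ is correctly set up, there is no remaining obstacle, and in particular no need to compute $V^{-1}$ explicitly or to invoke the coefficients of $f(x)$.
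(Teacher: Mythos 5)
Your proof is correct, but it takes a different route from the paper's own proof of Lemma~2.3. The paper's primary argument is computational: it writes down $V^{-1}$ explicitly (its columns are the coefficients of the Lagrange interpolation polynomials $g_i(x) = f(x)/((x-a_i)f'(a_i))$), observes that $V^{-1}C$ cyclically shifts those columns, and then shows directly that each column of $V^{-1}CV$ consists of the coefficients of a polynomial $h_j$ that is visibly $\alpha$-invariant. Your argument sidesteps all of that by establishing the single matrix identity $\alpha(V) = C^{-1}V$ (applying $\alpha$ entrywise shifts the rows of the Vandermonde matrix cyclically, which is exactly what left-multiplication by $C^{-1}$ does) and then conjugating: since $\alpha$ respects matrix multiplication and fixes $C$, one gets $\alpha(V^{-1}CV) = (C^{-1}V)^{-1}C(C^{-1}V) = V^{-1}CV$, so every entry lies in $F$. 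What you have rediscovered is, up to replacing $\alpha$ by $\beta = \alpha^{-1}$, precisely the alternate proof that the paper attributes to its referee in the Remark immediately following the lemma ($\beta(V) = CV$, hence $\beta(V^{-1}CV) = V^{-1}C^{-1}\cdot C\cdot CV = V^{-1}CV$). Your version is cleaner and avoids computing $V^{-1}$; what the paper's computational proof buys in exchange is the explicit description of $V^{-1}CV$ via the polynomials $h_j$, which is in the spirit of the paper's stated goal of making the representations explicit. One small point of rigor: your parenthetical claim that the $a_i$ must be distinct ``since any repetition would shorten the $\alpha$-orbit'' is fine if one reads the hypothesis as saying $\alpha$ acts on $\{a_0,\dots,a_{n-1}\}$ as a genuine $n$-cycle, but as literally stated the hypothesis $\alpha(a_i)=a_{i+1}$ (indices mod $n$) does not by itself exclude a shorter period; distinctness is really an implicit standing assumption needed for $V$ to be invertible at all.
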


\begin{proof}
Define $g_i(x) = \frac{f(x)}{(x-a_i)f'(a_i)}$, $i = 0,1,\dots n-1$ where
$f'(a_i)$ is the formal derivative of $f$ evaluated at $a_i$.  Let $g_i(x) =
d_{0i} + d_{1i}x +\cdots + d_{(n-1)i}x^{n-1}$ and let

\[ D =
\left( \begin{array}{llll}
d_{00} & d_{01} & \cdots & d_{0n-1} \\
d_{10} & d_{11} & \cdots & d_{1n-1} \\
\vdots & & \cdots & \vdots \\
d_{n-1,0} & d_{n-1,1} & \cdots & d_{n-1,n-1} \end{array}\right) \;
\mbox{, i.e., $D$ is the matrix whose columns}
\]
are the coefficients of the $g_i$.  Since $g_i(a_j) = \delta_{ij}$, the
Kronecker delta, we have $VD = I$ and so $D = V^{-1}$.  Now

\[ V^{-1}C =
\left( \begin{array}{lllll}
d_{01} & d_{02} & \cdots & d_{0n-1} & d_{00} \\
d_{11} & d_{12} & \cdots & d_{1n-1} & d_{10} \\
\vdots & \vdots & \cdots & \vdots & \vdots \\
d_{n-1,1} & d_{n-1,2} & \cdots & d_{n-1,n-1} & d_{n-1,0} \end{array}\right)
\]
and so it is easily seen that the $(j+1)^{st}$ column of $V^{-1}CV$
consists of the coefficients of the polynomial $h_j(x) = a^j_0g_1(x) +
a^j_1g_2(x) + \cdots + {a_{n-2}}^{j}g_{n-1}(x) + {a_{n-1}}^{j}g_0(x)$.  Now
$g_i(x) = \frac{f(x)}{(x-a_i)f'(a_i)}$ and so applying the automorphism
$\alpha$ and bearing in mind the fact that $f(x)\in F[x]$ we get, if
$0\leq i\leq n-2$,
\[ \alpha(g_i(x)) = \frac{f(x)}{(x-a_{i+1})f'(a_{i+1})} = g_{i+1}(x)
\mbox{  while} \]
\[\alpha (g_{n-1}(x)) = \frac{f(x)}{(x-a_0)f'(a_0)} = g_0(x)\; . \]
Thus $\alpha (h_j(x)) = a^j_1g_2(x)\cdots + {a_{n-1}}^{j}g_0(x) + a^j_0g_1(x)
= h_j(x)$.  Since the fixed field of $\alpha$ is $F$, it follows that
$V^{-1}CV\in M_n(F)$.
\end{proof}
\noindent
{\bf Remark.} The referee of our paper found the following nice, less
computational proof of Lemma~2.3.  Let $\beta=\alpha^{-1}$. Then
$\beta$ acts naturally on the matrices in $M_n(K)$, element-wise, and we have $\beta
(V)=CV$. Hence $\beta(V^{-1})=(\beta(V))^{-1}=(CV)^{-1}=V^{-1}C^{-1}$.
Thus $\beta(V^{-1}CV)=\beta(V^{-1})\beta(C)\beta(V)=V^{-1}C^{-1}CCV=
V^{-1}CV$ and again we can conclude that $V^{-1}CV\in M_n(F)$.

In the next lemma we use the hypothesis that the Frobenius automorphism
$\tau (x) = x^q,x\in \overline{\mathbb{F}}_s$ is transitive on the set
$\mathscr{P} = \{\zeta,\zeta^k,\dots,\zeta^{k^{n-1}}\}$. Observe that
this simply means that $q$ (mod $m$) and $k$ (mod $m$) generate the
same subgroups of $(\mathbb{Z}/m\mathbb{Z})^*$

\begin{lemma}
Let $G = \langle a,b\mid a^m = 1 = b^n; b^{-1}ab = a^k\rangle$ where
$|k|_m = n$.  Let $\zeta$ be a primitive $m^{th}$ root of unity in
$\overline{\mathbb{F}}_s$ and let $q$ be a power of $s$.  Assume that
the Frobenius automorphism $\tau$ on $\overline{\mathbb{F}}_s$ defined
by $\tau (x) = x^q$ is transitive on $\mathscr{P} = \{\zeta
,\zeta^k,\dots , \zeta^{k^{n-1}}\}$ and let $\rho :\langle
a\rangle\to\overline{\mathbb {F}}^*$ be the representation defined by
$\rho (a) = \zeta$.  Then the induced representation $\rho ^G$ is
realizable over $\mathbb{F}_q$.
\end{lemma}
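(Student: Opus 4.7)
The plan is to exhibit a change of basis over $\overline{\mathbb{F}}_s$ that simultaneously brings the matrices of $\rho^G(a)$ and $\rho^G(b)$ into $M_n(\mathbb{F}_q)$. By Proposition~2.1 (case $|k|_m = n$), in the basis $\mathscr{B} = \{e_0, \ldots, e_{n-1}\}$ we have $[\rho^G(a)]_{\mathscr{B}} = \mathrm{diag}(\zeta_0, \ldots, \zeta_{n-1})$ and $[\rho^G(b)]_{\mathscr{B}} = C$, where $\zeta_i = \zeta^{k^i}$ and $C$ is the cyclic shift matrix of Lemma~2.3. By Lemma~2.2, $\tau$ acts on $\mathscr{P}$ by $\zeta_i \mapsto \zeta_{i+c}$ for some $c$ with $q \equiv k^c \pmod{m}$; transitivity forces this permutation to be a single $n$-cycle, hence $\gcd(c,n) = 1$.

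Since Lemma~2.3 is stated with roots shifted by $1$, not by $c$, I would first relabel. Set $a_i := \zeta^{k^{ic}}$ for $0 \le i \le n-1$; because $\gcd(c,n) = 1$, we have $\{a_0, \ldots, a_{n-1}\} = \mathscr{P}$ and $\tau(a_i) = a_{i+1}$. Correspondingly, introduce the reordered basis $\mathscr{B}' = \{e'_0, \ldots, e'_{n-1}\}$ with $e'_i := e_{ic \bmod n}$. A direct check gives $\rho^G(a) e'_i = a_i e'_i$ and $\rho^G(b) e'_i = e'_{i + d \bmod n}$, where $d := c^{-1} \bmod n$; thus $[\rho^G(a)]_{\mathscr{B}'} = \mathrm{diag}(a_0, \ldots, a_{n-1})$ and $[\rho^G(b)]_{\mathscr{B}'} = C^d$.

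Next I would form the Vandermonde matrix $V$ whose $i$-th row is $(1, a_i, a_i^2, \ldots, a_i^{n-1})$ and apply Lemma~2.3 with $K = \overline{\mathbb{F}}_s$, $\alpha = \tau$, and fixed field $F = \mathbb{F}_q$. The conclusion $V^{-1} C V \in M_n(\mathbb{F}_q)$ immediately yields $V^{-1} \rho^G(b) V = V^{-1} C^d V = (V^{-1} C V)^d \in M_n(\mathbb{F}_q)$. For $\rho^G(a)$, a short computation identifies $V^{-1}\,\mathrm{diag}(a_0, \ldots, a_{n-1})\, V$ as the companion matrix of $f(x) = \prod_{i=0}^{n-1}(x - a_i)$; since $\tau$ cyclically permutes the $a_i$, we have $f(x) \in \mathbb{F}_q[x]$, so this companion matrix also lies in $M_n(\mathbb{F}_q)$. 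Conjugating by $V$, after the passage from $\mathscr{B}$ to $\mathscr{B}'$, therefore realizes $\rho^G$ over $\mathbb{F}_q$.

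The main obstacle is precisely the mismatch between Lemma~2.3's normalized hypothesis $\alpha(a_i) = a_{i+1}$ and the actual action of $\tau$ on $\mathscr{P}$, which shifts the index by $c$ rather than by $1$. My relabeling removes this mismatch at the cost of turning $\rho^G(b)$ from $C$ into $C^d$; but since $V^{-1} C V$ already lies in $M_n(\mathbb{F}_q)$, so do all of its powers, so this reshuffling is harmless.
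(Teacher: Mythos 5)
Your proof is correct and follows essentially the same route as the paper: your relabeling $e'_i := e_{ic \bmod n}$ is precisely the paper's passage to the coset representatives $1, c, c^2,\dots, c^{n-1}$ with $c = b^i$ (where $q\equiv k^i \pmod m$ and $(i,n)=1$), after which both arguments invoke Lemma~2.3 on the cyclic shift $C$ and the observation that $V^{-1}\,\mathrm{diag}(a_0,\dots,a_{n-1})\,V$ is the companion matrix of $f\in\mathbb{F}_q[x]$. The only cosmetic difference is that the paper conjugates the generator matrix $\rho^G(c)=C$ directly, while you observe $\rho^G(b)=C^d$ and note that $V^{-1}C^dV=(V^{-1}CV)^d\in M_n(\mathbb{F}_q)$; these are equivalent since $b$ and $c$ generate the same cyclic subgroup.
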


\begin{proof}
By assumption, $\zeta ,\zeta^q,\dots , \zeta^{q^{n-1}}$ are distinct and
$\zeta^{q^n} = \zeta$ whence $|q| _m = n$.  By Lemma 2.2, $q\equiv k^i(m)$ for
some $i$.  Since $|q|_m = |k|_m$, it follows that $(i,n) = 1$.  Hence, letting
$c = b^i$ we have $G = \langle a,c\mid a^m = 1 = c^n$, $c^{-1}ac =
a^q\rangle$.  The induced representation $\rho ^G$ using the coset
representatives $1,c,c^2,\dots , c^{n-1}$ is given by

\[ \rho^G(c) =
\left( \begin{array}{ccccc}
0 & 0 & \cdots & 0 & 1 \\
1 & 0 & \cdots & 0 & 0 \\
0 & 1 & \cdots & 0 & 0 \\
\vdots & \vdots & \cdots & \vdots & \vdots \\
0 & 0 & \cdots & 1 & 0 \end{array}\right) \; ,\quad
\rho^G(a) = \left( \begin{array}{cccc}
\zeta & & & 0 \\
& \zeta^q & & \\
& & \ddots & \\
0 & & & \zeta^{q^{n-1}} \end{array}\right) \quad .
\]
Set $\rho^G(c) = C$, $\rho^G(a) = A$ and let \d f(x) = \prod^{n-1}_{i=0}
(x-\zeta^{q^i})$.  Since $\tau (f(x)) = f(x)$ it follows that $f(x)\in
\mathbb{F}_q[x]$, say $f(x) = a_0 + a_1x + \cdots + x^n$.

Let $W$ be an $n$-dimensional vector space over
$\overline{\mathbb{F}_s}$ and let $\mathscr{A} = \{v_0,v_1,v_2,\dots,
v_{n-1}\}$ be a basis of $W$.  Let $L:W\to W$ be the linear
transformation with $[L]_\mathscr{A} = A$.  Let $\zeta_i = \zeta^{q^i},
0\leq i\leq n - 1$ and let
\[
\begin{array}{l}
w_0 = v_0 + v_1 +\cdots + v_{n-1} \\
w_1 = \zeta_0 v_0 + \zeta_1 v_1 +\cdots + \zeta_{n-1}v_{n-1} \\
\vdots \\
w_{n-1} = \zeta^{n-1}_0 v_0 + \zeta^{n-1}_1 v_1 + \cdots +
\zeta^{n-1}_{n-1}v_{n-1} \; . \end{array}
\]
Then $Lw_i = w_{i+1}, 0\leq i\leq n-2$ and
\[ Lw_{n-1} = \zeta^n_0v_0 + \zeta^n_1v_1 + \cdots +
\zeta^n_{n-1}v_{n-1}\; . \]
But $a_0 + a_1\zeta_i + a_2\zeta^2_i + \cdots + \zeta^n_i = 0$ for all
$i$ and so
\[Lw_{n-1} = -a_0w_0 - a_1w_1\dots -a_{n-1}w_{n-1}\; .\]
Let $\mathscr{B} = \{w_1,w_1,\dots , w_{n-1}\}$.  Then
\[ [L]_\mathscr{B} =
\left( \begin{array}{ccccc}
0 & 0 & \cdots & 0 & -a_0 \\
1 & 0 & \cdots & 0 & -a_1 \\
0 & 1 & & & \\
\vdots & \vdots & \cdots & \vdots & \vdots \\
0 & 0 & \cdots & 1 & -a_{n-1} \; .\end{array}\right) \]
the companion matrix of $f(x)$.  Let
\[ V = \left( \begin{array}{llll}
1 & \zeta_0 & \cdots & \zeta^{n-1}_0 \\
1 & \zeta_1 & \cdots & \zeta^{n-1}_1 \\
\vdots & \vdots & & \vdots \\
1 & \zeta_{n-1} & \cdots & \zeta^{n-1}_{n-1} \end{array}\right) \]
be the Vandermonde matrix.  We have that
\[ V^{-1}AV = [L]_\mathscr{B}\in M_{n}(\mathbb{F}_q) \]
and by Lemma 2.3, since the fixed field of $\langle \tau\rangle$ is
$\mathbb{F}_q$,
\[ V^{-1}CV\in M_{n}(\mathbb{F}_q) \; . \]
Hence $\rho^G$ is realizable over $\mathbb{F}_q$.
\end{proof}

We are now ready to prove our main result.

\begin{theorem}
Let $G = \langle a,b\mid a^m = 1 = b^n, b^{-1}ab = a^k\rangle$ where
$|k|_m = n$.  As above, let $\rho$ be the representation of
$\langle a\rangle$ defined by $\rho (a) = \zeta$ where $\zeta$ is a
primitive $m^{th}$ root of $1$ in $\overline{\mathbb{F}_s}$, $(s,m) =
1$.  Then the induced representation $\rho^G$ is realizable over
$\mathbb{F}_q$ iff $q\equiv k^i(m)$ for some $i, 0\leq i\leq n-1$.
Here, as above, $q$ is a power of $s$.
\end{theorem}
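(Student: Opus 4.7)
The plan is to prove both directions, with $(\Leftarrow)$ carrying the real content.

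For the forward direction: if $\rho^G$ is realizable over $\mathbb{F}_q$, there exists $M_a\in M_n(\mathbb{F}_q)$ conjugate over $\overline{\mathbb{F}}_s$ to $\rho^G(a)$, which in the standard induced basis is the diagonal matrix with entries $\zeta,\zeta^k,\ldots,\zeta^{k^{n-1}}$. Their common characteristic polynomial
\[
f(x) = \prod_{j=0}^{n-1}(x-\zeta^{k^j}) \in \mathbb{F}_q[x]
\]
is therefore fixed by the Frobenius $\tau$, so $\tau$ permutes its root set $\mathscr{P}=\{\zeta^{k^j}\}$ (these roots are pairwise distinct since $|k|_m=n$). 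Lemma 2.2 then yields $q\equiv k^i\pmod{m}$ for some $i$ with $0\le i\le n-1$.

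For the reverse direction, assume $q\equiv k^i\pmod{m}$ and set $d=\gcd(i,n)$, $t=n/d$, $i=di'$ with $\gcd(i',t)=1$. Pass to the subgroup $G'=\langle a,b^d\rangle \le G$; writing $b'=b^d$, it has presentation
\[
G' = \langle a, b'\mid a^m=1=(b')^t,\ (b')^{-1}ab'=a^{k^d}\rangle,
\]
and one checks $|k^d|_m = n/\gcd(n,d) = t$, matching the order of $b'$. Moreover $q\equiv (k^d)^{i'}\pmod{m}$ with $\gcd(i',t)=1$, so the Frobenius $\tau$ acts as a single $t$-cycle on $\mathscr{P}'=\{\zeta,\zeta^{k^d},\ldots,\zeta^{k^{(t-1)d}}\}$. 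Thus Lemma 2.4 applies to $G'$ (with $k^d,t$ in the roles of $k,n$) and gives that the induced representation $\rho^{G'}$ of $G'$ is realizable over $\mathbb{F}_q$.

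By transitivity of induction, $\rho^G=(\rho^{G'})^G$; the standard construction of an induced representation via coset representatives of $G'$ in $G$ produces block matrices whose nonzero entries are exactly the entries of $\rho^{G'}$ together with $0$ and $1$, so inducing a representation realized over $\mathbb{F}_q$ again yields one realized over $\mathbb{F}_q$. Hence $\rho^G$ is realizable over $\mathbb{F}_q$. (When $d=1$ we have $G'=G$, and this is simply a direct application of Lemma 2.4.) The main obstacle I anticipate is setting up the intermediate subgroup $G'$ and checking that its presentation satisfies the hypothesis of Lemma 2.4 — once that is in hand, both directions combine cleanly, with the forward direction coming from invariance of the characteristic polynomial and the reverse from a descent via induction.
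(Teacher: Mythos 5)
Your proof is correct and follows essentially the same route as the paper's: the forward direction via the Frobenius-invariance of the characteristic polynomial of $\rho^G(a)$ and Lemma~2.2, and the reverse direction by descending to an intermediate subgroup, applying Lemma~2.4 there, and then lifting via transitivity of induction. The only (cosmetic) difference is in how the intermediate subgroup is presented: the paper picks the generator $c=b^{u\alpha}$ so that $c^{-1}ac=a^q$ holds on the nose, whereas you keep the natural generator $b^d$ (and your $d,t,i'$ are precisely the paper's $u,v,\alpha$, and $\langle b^d\rangle=\langle b^{u\alpha}\rangle$), simply checking the hypotheses of Lemma~2.4 with $k^d$ in place of $k$.
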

\begin{proof}
Assume $\rho^G$ is realizable over $\mathbb{F}_q$.  Then since
$\rho^G(a) = \left( \begin{array}{cccc}
\zeta & & & 0 \\
& \zeta^k \\
& & \ddots \\
0 & & & \zeta^{k^{n-1}} \end{array}\right)$,
it follows that \d f(x) = \prod^{n-1}_0 (x - \zeta^{k^i})$, the
characteristic polynomial of $\rho^G(a)$ is in $\mathbb{F}_q[x]$.
Hence the Frobenius automorphism $\tau$ defined above permutes the
elements of the set
\[ \mathscr{P} = \{\zeta ,\zeta^k,\dots ,\zeta^{k^{n-1}}\}.\]  By Lemma 2.2, $q\equiv k^i(m)$ for some $i$.

Conversely, assume $q\equiv k^j(m)$ for some $j$.  Again by Lemma 2.2,
$\tau$ permutes the elements of $\mathscr{P} = \{\zeta, \zeta^k,\dots
,\zeta^{k^{n-1}}\}$.  Since $\langle\tau\rangle$ acts regularly on
$\mathscr{P}$, the orbits of $\langle\tau\rangle$ are all of the same
size.  Let there be $u$ orbits of size $v$ each so that $uv = n$.  Now
$|q|_m = v$ and since $q\equiv k^j(m)$ we have $1\equiv q^v\equiv
k^{vj}(m)$.  Thus $uv\mid vj$ whence $u\mid j$ and so $j = u\alpha$,
say.  Since $|k^u|_m = v = |q|_m$, it follows that
$(\alpha ,v) = 1$.  Therefore $|b^{u\alpha}| = v$.  Set $c =
b^{u\alpha}$ and let $H = \langle a,c\rangle$.  Then $c^{-1}ac =
a^{k^{u\alpha}} = a^q$.  Now $\langle\tau\rangle$ acts transitively on
$\{\zeta,\zeta^q,\dots ,\zeta^{q^{v-1}}\}$ and so, by Lemma 2.4,
$\rho^H$ is realizable over $\mathbb{F}_q$.  Hence $(\rho^H)^G$ is
realizable over $\mathbb{F}_q$.  But, by transitivity of induction,
$(\rho^H)^G$ is equivalent to $\rho^G$ whence $\rho^G$ is realizable
over $\mathbb{F}_q$.
\end{proof}

We now proceed to give a specific example of what happens when
$|k|_m\not= n$ (see Theorem 2.1).  It turns out that this
example mirrors faithfully the situation which obtains in the case of
the class of generalized quaternion groups $Q_{4m}, m$ odd.
\begin{example}
Let $Q_{12} = \langle a,b\mid a^3 = 1 = b^4, b^{-1}ab = a^2\rangle$.  In this
case $|k|_3 = 2\not= |b|$.  The conjugacy classes of $Q_{12}$ are
$\{1\},\{b^2\},\{a,a^2\},\{b,ab,a^2b\}$, $\{b^3,ab^3,a^2b^3\}$,
$\{ab^2,a^2b^2\}$.  Hence over $\overline{\mathbb{F}_s}$ there are six
irreducible representations, of which there are clearly four of degree one and
two of degree two. Letting $i$ be one of the square roots of $-1$ we obtain
the following character table:
\[ \begin{array}{r|r|r|r|r|r|r|}
& 1 & a & b^2 & b & b^3 & ab^2 \\ \hline
\chi_1 & 1 & 1 & 1 & 1 & 1 & 1 \\ \hline
\chi_2 & 1 & 1 & -1 & i & -i & -1 \\ \hline
\chi_3 & 1 & 1 & -1 & -i & i & -1 \\ \hline
\chi_4 & 1 & 1 & 1 & -1 & -1 & 1 \\ \hline
\chi_5 & 2 & -1 & 2 & 0 & 0 & -1 \\ \hline
\chi_6 & 2 & -1 & -2 & 0 & 0 & 1 \\ \hline
\end{array} \]

$\chi_5$ is afforded by the representation
\[ \rho_5(a) = \left( \begin{array}{rr}
0 & -1 \\
1 & -1 \end{array} \right)\; ,\quad \rho_5(b) = \left( \begin{array}{ll} 1 & -1 \\
0 & -1 \end{array} \right) \]
while $\chi_6$ is afforded by
\[ \rho_6(a) = \left( \begin{array}{ll}
\zeta & 0 \\
0 & \zeta^2 \end{array}\; \right)\; ,\quad \rho_6(b) = \left( \begin{array}{lr}
0 & -1 \\
1 & 0 \end{array} \right) \; . \]

The representation $\rho^G$ is induced from $\rho :\langle
a\rangle\to\langle\zeta\rangle$ where $\zeta$ is a primitive third root
of $1$ is
\[ \rho^G(a) = \left( \begin{array}{cccc}
\zeta & & & 0 \\
& \zeta^2 \\
& & \zeta \\
0 & & & \zeta^2 \end{array}\right) \]
\[ \rho^G(b) = \left( \begin{array}{cccc}
0 & 0 & 0 & 1 \\
1 & 0 & 0 & 0 \\
0 & 1 & 0 & 0 \\
0 & 0 & 1 & 0 \end{array}\right) \]
and this is easily seen to be equivalent to $\rho_5 \dot{+} \rho_6$:
\[ a\to \left( \begin{array}{rrrl}
0 & -1 & 0 & 0 \\
1 & 0 & 0 & 0 \\
0 & 0 & \zeta & 0 \\
0 & 0 & 0 & \zeta^2 \end{array}\right) \]
\[ b\to \left( \begin{array}{rrrr}
0 & 1 & 0 & 0 \\
1 & 0 & 0 & 0 \\
0 & 0 & 0 & -1 \\
0 & 0 & 1 & 0 \end{array}\right) \; . \]
Thus to find an explicit matrix representation of $\rho^G$ over some
$\mathbb{F}_q$, we need only realize $\rho_6$ over $\mathbb{F}_q$.  We do
this in full generality for $Q_{4m}$, $m$ odd.
\end{example}
\begin{prop}
Let $Q_{4m} = \langle a,b\mid a^m = 1 = b^4,b^{-1}ab = a^{-1}\rangle$
and let $\rho :\langle a\rangle\to \overline{\mathbb{F}_s}$ be the
representation defined by $\rho(a) = \zeta$ where $\zeta$ is a primitive
$m^{th}$ root of unity.  Then $\rho^G$ is the direct sum of the
representations $\sigma$ and $\tau$ where
\[ \sigma(a) = \left( \begin{array}{ll}
\zeta & 0 \\
0 & \zeta^{-1} \end{array} \right)\; ,\quad \sigma(b) = \left( \begin{array}{cc}
0 & 1 \\
1 & 0 \end{array} \right) \]
and
\[ \tau(a) = \left( \begin{array}{cc}
\zeta & 0 \\
0 & \zeta^{-1} \end{array} \right)\; ,\quad \tau(b) = \left( \begin{array}{rr}
0 & -1 \\
1 & 0 \end{array} \right) \; . \]
\end{prop}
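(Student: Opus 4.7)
The plan is to specialize Proposition~2.1(2) to the group $G=Q_{4m}$. Here the defining relations give $n=4$ and $k=-1$, and since $m$ is odd we have $t:=|k|_m=|-1|_m=2$, which is strictly less than $n$; so we are in the framework of part~(2), with $r:=n/t=2$. Because $s$ is an odd prime coprime to $m$, we have $(r,s)=(2,s)=1$, and the hypothesis of part~(2) is satisfied.

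Taking $\eta=-1\in\overline{\mathbb{F}_s}$ as the primitive $r$th ($=$ second) root of unity, so that $\eta_0=1$ and $\eta_1=-1$, Proposition~2.1(2) yields a decomposition $\rho^G\approx\rho_0\,\dot{+}\,\rho_1$ together with bases $\mathscr{B}_0,\mathscr{B}_1$ relative to which
\[
[\rho_i(a)]_{\mathscr{B}_i}=\left(\begin{array}{cc}\zeta & 0 \\ 0 & \zeta^{-1}\end{array}\right),\qquad
[\rho_i(b)]_{\mathscr{B}_i}=\left(\begin{array}{cc}0 & \eta_i^{-1} \\ 1 & 0\end{array}\right),
\]
the $b$-matrix being the $t\times t=2\times 2$ specialization of the general companion-style block in that proposition. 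Setting $i=0$ recovers exactly $\sigma$, and setting $i=1$ recovers exactly $\tau$, which is the asserted decomposition.

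I do not anticipate a genuine obstacle here, since Proposition~2.1 has already carried out all the substantive work of exhibiting the $G$-invariant subspaces $W_0,W_1$, verifying irreducibility, and distinguishing the summands via the characteristic polynomials $X^2-\eta_i^{-1}=X^2\mp 1$, which differ for $i=0$ and $i=1$. The only point to verify by hand is that the template matrices of Proposition~2.1 collapse, at $t=2$, to the stated $2\times 2$ form; this is immediate from inspection.
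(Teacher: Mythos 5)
Your proof is correct, and it takes a genuinely different route from the paper's. You treat the proposition as a direct specialization of Proposition~2.1(2): with $k\equiv -1\pmod m$ and $m$ odd (hence $m\ge 3$) one has $t=|{-1}|_m=2$, $n=4$, $r=n/t=2$, and $(r,s)=1$ because $s$ is odd; the unique primitive square root of unity $\eta=-1$ then gives $\eta_0^{-1}=1$ and $\eta_1^{-1}=-1$, so the $2\times 2$ template matrices of Proposition~2.1(2) collapse exactly to $\sigma$ and $\tau$. The paper instead re-derives the decomposition from scratch: it checks that $\sigma$ and $\tau$ are irreducible by observing that $\sigma(a),\sigma(b)$ (resp.\ $\tau(a),\tau(b)$) share no eigenvector, then computes $(\rho^G,\sigma)=(\rho,\sigma|_{\langle a\rangle})=1$ and $(\rho^G,\tau)=1$ via Frobenius reciprocity (the oddness of $m$ entering as $\langle\zeta^2\rangle=\langle\zeta\rangle$), and concludes by degree count. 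Your approach is shorter because it reuses the explicit $G$-invariant subspaces $W_0,W_1$ already built in Proposition~2.1, whereas the paper's Frobenius-reciprocity computation is self-contained and illustrates a different tool; both are legitimate, and your route has the minor advantage of making the choice of $\eta$ transparent rather than leaving the reader to infer it.
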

%\emph{Proof.}
\begin{proof}
$\sigma$ and $\tau$ are both irreducible since $\sigma (a),\sigma (b)$
(respectively, $\tau (a),\tau (b)$) have no common eigenvector.  Also,
by Frobenius reciprocity,
\begin{eqnarray*}
(\rho^G,\sigma ) & = & (\rho ,\sigma_{\langle a\rangle}) = \frac{1}{m}[2
+ \zeta (\zeta +\zeta^{-1}) + \zeta^2(\zeta^2 +\zeta^{-2}) +\cdots
+\zeta^{m-1}(\zeta^{m-1} + \zeta^{-(m-1)})] \\
& = & 1 \; \mbox{since} \; \langle\zeta^2\rangle = \langle\zeta\rangle
\; \mbox{because $m$ is odd.}
\end{eqnarray*}
Similarly, $(\rho^G,\tau) = 1$.  Thus, since $deg \; \sigma = deg \; \tau = 2$
and $deg \; \rho^G = 4$, it follows that $\rho^G = \sigma \dot{+} \tau$.
\end{proof}
{\bf Remark.}
The representation $\sigma$ is a representation of $Q_{4m}/\langle
b^2\rangle \approx\langle a,c\mid a^m = 1 = c^2, c^{-1}ac =
a^{-1}\rangle$ and can be dealt with using Theorem 2.5.  (See also
Theorem 3.6.)  We are thus left with the computation of an explicit form
for $\tau$ over $\mathbb{F}_q$, where, as usual, $q$ is a power of the
prime $s$.  Naturally, if $\tau$ is realizable over $\mathbb{F}_q$, then
$\zeta + \zeta^{-1}\in \mathbb{F}_q$.  Moreover, $\zeta +
\zeta^{-1}\in\mathbb{F}_q$ iff $q\equiv\pm 1(m)$ (see Theorem 3.6).
{\it Thus, given that} $\theta = \zeta + \zeta^{-1}\in\mathbb{F}_q$,
{\it we construct an explicit representation over} $\mathbb{F}_q$
{\it which is equivalent to} $\tau$.

We may assume $\zeta\notin\mathbb{F}_q$, otherwise $\tau$ itself is an
$\mathbb{F}_q$-representation.  Clearly the irreducible polynomial of
$\zeta$ over $\mathbb{F}_q$ is $f = x^2 - \theta x + 1$ and so $\zeta =
\frac{\theta + \sqrt{\theta^2-4}}{2}$ where we have taken a specific
square root of $\theta^2 - 4$ in $\overline{\mathbb{F}_q}$.

Choose $\alpha ,\beta\in\mathbb{F}_q$ such that $\alpha^2 + \beta^2 =
\theta^2 - 4$.  Since $\zeta\notin \mathbb{F}_q$ it follows that
$\beta\not= 0$.  Let $A = \left( \begin{array}{rr}
\alpha & \beta \\
\beta & -\alpha \end{array} \right)$ and set $\tilde{A} =
\frac{\theta}{2} + \frac{1}{2}A$.  By our choice of $\alpha$ and
$\beta$, we have $A^2 = (\theta^2 - 4)I$ and so the minimum polynomial
of $\tilde{A}$ is $f = x^2 - \theta x + 1$ since $\tilde{A}$ is not
diagonal.  The mapping $\varphi :\mathbb{F}_q[\zeta ]\to
M_2(\mathbb{F}_q)$ defined by
\[ \varphi(g(\zeta )) = g(\tilde{A}) \]
is well defined since the irreducible polynomial of $\zeta$ over
$\mathbb{F}_q$ is the same as the minimum polynomial of $\tilde{A}$, and
so embeds $\mathbb{F}_q [\zeta]$ in $M_2(\mathbb{F}_q)$.

Let $B = \left( \begin{array}{rr}
0 & -1 \\
1 & 0 \end{array} \right)$.  Then $B^{-1}AB = -A$ and so
$B^{-1}\tilde{A}B = \frac{\theta}{2}I - \frac{1}{2}A = \tilde{A}^{-1}$.
The map $\mu :Q_{4m}\to M_2(\mathbb{F}_q)$ defined by
\[ \mu(a) = \tilde{A}, \; \mu(b) = B \]
is clearly a faithful irreducible representation of $Q_{4m}$.   We show
that  $tr \tilde{A}^t = \zeta^t + \zeta^{-t}$.  Define $\gamma_1 = \theta$,
$\gamma_0 = 1$ and for $j\geq 2$ let $\gamma_j = \theta \gamma_{j-1}
-\gamma_{j-2}$. We claim that $\zeta^t = \gamma_{t-1}\zeta - \gamma_{t-2}$ for
all $t\geq 2$. Indeed, when $t = 2, \zeta^2 = \theta\zeta - 1 = \gamma_1\zeta
- \gamma_0$. Assume the result true for $t$.  Then
\begin{eqnarray*}
\zeta^{t+1} & = & \gamma_{t-1}\zeta^2 - \gamma_{t-2}\zeta \\
& = & \gamma_{t-1}(\theta\zeta - 1) - \gamma_{t-2}\zeta \\
& = & (\theta\gamma_{t-1} - \gamma_{t-2})\zeta - \gamma_{t-1} \\
& = & \gamma_t\zeta - \gamma_{t-1} \; . \quad \mbox{ By induction, the result
follows. }
\end{eqnarray*}
In a similar fashion we have $\zeta^{-t} = \gamma_{t-1}\zeta^{-1} -
\gamma_{t-2}$.  Now $\tilde{A}^t = \gamma_{t-1}\tilde{A} -
\gamma_{t-2}I$ and so $tr(\tilde{A}^t) = \theta\gamma_{t-1} -
2\gamma_{t-1}$ since $tr(\tilde{A}) = \theta$.  But $\zeta^t +
\zeta^{-t} = \theta\gamma_{t-1} - 2\gamma_{t-1} = tr \tilde{A}^t$.

Furthermore, it is easily checked that $tr(\tilde{A}^tB^j) = 0$ if $j$
is odd and $tr(\tilde{A}^tB^2) = -\zeta^t - \zeta^{-t}$.  Hence the
character afforded by $\mu$ is identical to that afforded by $\tau$.
Since each is irreducible, it follows that $\mu$ and $\tau$ are
equivalent.

Returning again to the case $Q_{12} = \langle a,b\mid a^3 = 1 = b^4$,
$b^{-1}ab = a^{-1}\rangle$, let $\mathbb{F}_q = \mathbb{F}_5$.  Since
$5\equiv -1(3)$, it follows that $\zeta + \zeta^{-1}\in\mathbb{F}_5$
where $\zeta$ is a primitive 3$^{rd}$ root of unity.  Indeed, since $x^2
+ x + 1$ is the irreducible polynomial of $\zeta$ over $\mathbb{F}_5$,
we have $\zeta + \zeta^{-1} = -1 = \theta$.  Hence $\theta^2 - 4 = -3$.
Choose $\alpha^2 + \beta^2 = -3$, say $\alpha = 1 = \beta$.  Then
$A = \left( \begin{array}{rr}
1 & 1 \\
1 & -1 \end{array} \right)$  and
\begin{eqnarray*} \tilde{A} & = & \frac{-I}{2} + \frac{1}{2} \left( \begin{array}{rr}
1 & 1 \\
1 & -1 \end{array} \right) \\
& = & 2I + 3 \left( \begin{array}{rr}
1 & 1 \\
1 & -1 \end{array} \right) \\
& = & \left( \begin{array}{rr}
0 & 3 \\
3 & -1 \end{array} \right)\; . \end{eqnarray*}
It now follows from the general theory above that the representation
\[ a\to \left( \begin{array}{cc}
\zeta & 0 \\
0 & \zeta^{-1} \end{array} \right) \; ,\quad b\to \left(
\begin{array}{rr}
0 & -1 \\
1 & 0 \end{array} \right) \]
is equivalent to
\[ a\to \left( \begin{array}{rr}
0 & 3 \\
3 & -1 \end{array} \right) \; ,\quad b\to \left(
\begin{array}{rr}
0 & -1 \\
1 & 0 \end{array} \right) \]

\section{Applications related to the quadratic reciprocity law}

By choosing special groups in Theorem 2.5 we are able to obtain some
interesting number-theoretic relations.  In particular, we shall recover the
classical quadratic reciprocity laws as well as an interesting property
concerning ``values of cosine", (see [Syl].  This property was also
re-discovered and generalized in [M-R].)  It is clear that many other
interesting facts can be established by choosing appropriate groups.  This is
left to the interested reader, and authors who are certainly interested.
\begin{example}
{\bf The quadratic reciprocity law.}  Let $p$ and $s$ be distinct odd primes,
$g$ a primitive root modulo $p$ and $\zeta$ a primitive $p^{th}$ root of unity
in $\overline{\mathbb{F}_s}$.  Let $G = G(p) = \langle a,b\mid a^p = 1 =
b^{\frac{p-1}{2}}, b^{-1}ab = a^{g^2}\rangle$ (see \cite[Section 5.15]{jac}).
Since $|g^2|_p = |b|$, we are in a position to apply Theorem 2.5.  Thus let
$\rho = \rho(p) : \langle a\rangle\to \overline{\mathbb{F}_s}$ be defined by
$\rho(a) = \zeta$ and let $(s/p)$ be the usual Legendre symbol.  Then we have
the following three theorems.
\end{example}
\begin{theorem}
$\rho^G$ is realizable over $\mathbb{F}_s$ iff $(\frac{s}{p}) = 1$.
\end{theorem}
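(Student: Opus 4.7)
The plan is to apply Theorem 2.5 directly, taking $m = p$, $n = (p-1)/2$, $k = g^2$, and $q = s$. First I would verify the hypothesis $|k|_m = n$: since $g$ is a primitive root mod $p$, the element $g^2$ has order $(p-1)/\gcd(2,p-1) = (p-1)/2 = n$ in $(\mathbb{Z}/p\mathbb{Z})^*$, so Theorem 2.5 applies to the group $G(p)$ and the representation $\rho$.

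Next, specializing Theorem 2.5 tells us that $\rho^G$ is realizable over $\mathbb{F}_s$ if and only if $s \equiv (g^2)^i \pmod{p}$ for some $i$ with $0 \leq i \leq n-1$. The right-hand condition simply says that $s$, reduced mod $p$, lies in the cyclic subgroup $\langle g^2 \rangle \leq (\mathbb{Z}/p\mathbb{Z})^*$.

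The key number-theoretic translation to carry out is then that $\langle g^2 \rangle$ is precisely the subgroup of quadratic residues modulo $p$. Since $g$ generates the full unit group, $\langle g^2 \rangle$ has index $2$ in $(\mathbb{Z}/p\mathbb{Z})^*$, and it consists of exactly the squares: any square $(g^j)^2 = g^{2j}$ lies in $\langle g^2 \rangle$, and conversely every power $g^{2i}$ is a square. Thus $s \in \langle g^2 \rangle$ iff $s$ is a quadratic residue mod $p$, which by the very definition of the Legendre symbol is the statement $(s/p) = 1$. Combining with the previous paragraph completes both implications.

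There is essentially no serious obstacle here; the real content sits in Theorem 2.5 and in the well-known identification of the squares in $(\mathbb{Z}/p\mathbb{Z})^*$ with the even powers of a primitive root. If anything, the only point worth being careful about is confirming that the order of $g^2$ really is $(p-1)/2$ (using that $p$ is odd so that $\gcd(2, p-1) = 2$), so that Theorem 2.5 is legitimately applicable with the chosen value of $n$.
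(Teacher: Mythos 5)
Your proof is correct and follows exactly the paper's route: both apply Theorem~2.5 with $m=p$, $n=(p-1)/2$, $k=g^2$, $q=s$ and then observe that $s\equiv g^{2i}\pmod p$ for some $i$ is precisely the statement that $s$ is a quadratic residue mod~$p$. Your extra check that $|g^2|_p=(p-1)/2$ is a sensible verification that the paper leaves implicit (it had already noted $|g^2|_p=|b|$ in Example~3.1).
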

\begin{theorem}
$\rho^G$ is realizable over $\mathbb{F}_s$ iff $(\frac{p^*}{s}) = 1$
where $p^* = (-1)^{\frac{p-1}{2}}p$.
\end{theorem}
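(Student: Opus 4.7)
The plan is to deduce Theorem 3.3 by combining Theorem 3.2, which already identifies realizability of $\rho^G$ over $\mathbb{F}_s$ with the condition $\left(\frac{s}{p}\right)=1$, together with the classical evaluation of the quadratic Gauss sum. With Theorem 3.2 in hand, it suffices to prove the purely number-theoretic equivalence $\left(\frac{s}{p}\right) = 1$ if and only if $\left(\frac{p^*}{s}\right) = 1$, which is precisely the quadratic reciprocity law for the pair $(p,s)$.

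Accordingly, I would first introduce the Gauss sum
\[
G \;=\; \sum_{x=1}^{p-1}\left(\tfrac{x}{p}\right)\zeta^{x} \;\in\; \overline{\mathbb{F}}_s,
\]
and invoke (or briefly prove via the standard double-sum manipulation) the classical identity $G^{2} = p^{*} = (-1)^{(p-1)/2}p$. Since $s \neq p$, this in particular forces $G \neq 0$ in $\overline{\mathbb{F}}_s$.

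Next I would compute the action of the Frobenius $\tau(x) = x^s$ on $G$. Substituting $y \equiv sx \pmod{p}$ and using that $\left(\frac{s^{-1}}{p}\right) = \left(\frac{s}{p}\right)$, one obtains
\[
\tau(G) \;=\; \sum_{x}\left(\tfrac{x}{p}\right)\zeta^{sx} \;=\; \sum_{y}\left(\tfrac{s^{-1}y}{p}\right)\zeta^{y} \;=\; \left(\tfrac{s}{p}\right) G .
\]
Because $G \neq 0$ and $\mathbb{F}_s$ is the fixed field of $\langle\tau\rangle$, this identity yields $G \in \mathbb{F}_s$ if and only if $\left(\frac{s}{p}\right) = 1$. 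On the other hand, from $G^{2} = p^{*}$ one sees that $G \in \mathbb{F}_s$ if and only if $p^{*}$ is a square in $\mathbb{F}_s$, that is, if and only if $\left(\frac{p^{*}}{s}\right) = 1$. Chaining these two equivalences and applying Theorem 3.2 closes the argument.

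The main obstacle is the Gauss sum identity $G^{2} = p^{*}$, which is the one genuinely nontrivial external input; once it is available, both remaining steps (the transformation law $\tau(G) = (s/p)G$ and the extraction of a square root) are short formal manipulations, and the proof stays fully within the elementary, constructive spirit of the paper.
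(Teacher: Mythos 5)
Your proof is correct and takes essentially the same approach as the paper's: both hinge on combining Theorem~3.2 with the evaluation of a quadratic Gauss sum in $\overline{\mathbb{F}}_s$. The only cosmetic difference is that you work with the full Gauss sum $G=\sum_x\bigl(\tfrac{x}{p}\bigr)\zeta^x$ and the transformation law $\tau(G)=\bigl(\tfrac{s}{p}\bigr)G$, whereas the paper works with the quadratic period $c=\sum_{x\in Q}\zeta^x=\tfrac{-1\pm\sqrt{p^*}}{2}$ (equivalently, $G=2c+1$) and argues directly about whether the Frobenius preserves or swaps the two periods; the substance is the same.
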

\begin{corollary} \label{Gauss}
(The quadratic reciprocity law [Gau])
\[ \left(\frac{s}{p} \right) = \left(\frac{p^*}{s} \right) \; . \]
\end{corollary}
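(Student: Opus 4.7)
The plan is entirely transparent: the corollary follows immediately by chaining Theorems 3.2 and 3.3, which both characterize the \emph{same} external condition (realizability of $\rho^G$ over $\mathbb{F}_s$) in terms of two different Legendre symbols.

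First I would invoke Theorem 3.2 to record
\[
\left(\frac{s}{p}\right) = 1 \iff \rho^G \text{ is realizable over } \mathbb{F}_s,
\]
and then Theorem 3.3 to record
\[
\left(\frac{p^*}{s}\right) = 1 \iff \rho^G \text{ is realizable over } \mathbb{F}_s.
\]
Combining these two biconditionals immediately yields
\[
\left(\frac{s}{p}\right) = 1 \iff \left(\frac{p^*}{s}\right) = 1.
\]

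To finish, I would observe that since $s$ and $p$ are distinct odd primes, $p \nmid s$ and $s \nmid p^* = \pm p$, so each of the two Legendre symbols takes a value in $\{+1,-1\}$ (never $0$). An equivalence of the form ``one equals $+1$ iff the other equals $+1$'' on symbols with values in a two-element set forces outright equality: either both equal $+1$, or both equal $-1$. This gives $\left(\frac{s}{p}\right) = \left(\frac{p^*}{s}\right)$, which is the quadratic reciprocity law. There is no genuine obstacle at this stage; all the substantive content has already been packaged into Theorems 3.2 and 3.3, and the corollary is a one-line deduction from them together with the dichotomy of Legendre symbol values.
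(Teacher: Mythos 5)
Your proposal is correct and matches the paper's approach exactly: the corollary is stated in the text to ``follow immediately'' from Theorems 3.2 and 3.3, which is precisely the chaining you carry out, together with the (elementary but worth stating) observation that both Legendre symbols are nonzero so the biconditional forces equality.
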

\begin{proof}[Proof of Theorem 3.2.]
By Theorem 2.5, $\rho^G$ is realizable over $\mathbb{F}_s$ iff $s\equiv
g^{2i}(p)$ for some $i, 0\leq i\leq \frac{p-3}{2}$.  That is, $\rho^G$
is realizable over $\mathbb{F}_s$ iff $s$ is a square modulo $p$.
\end{proof}

\noindent
{\bf Remark.}
Observe that the realizability of $\rho^G$ over $\mathbb{F}_s$ depends
entirely on $s$ (mod $p$)!
\begin{proof}[Proof of Theorem 3.3.]
By Theorem~3.2, if $\rho^G$ is realizable over $\mathbb{F}_s$,
then the Frobenius automorphism $x\to x^s$ on $\overline{\mathbb{F}}_s$
permutes the elements of the set $\{\zeta,\zeta^{g{^2}},\dots ,\zeta^{g^{p-3}}\}$.
But then \d \sum^{(p-3)/2}_{i=0} \zeta^{g^{2i}} \in
\mathbb{F}_s$.  On the other hand (\cite[Section 5.15]{jac})  we have \d
\sum^{(p-3)/2}_{i=0} \zeta^{g^{2i}} = \frac{-1\pm\sqrt{p^*}}{2}$, whence
$\sqrt{p^*} \in  \mathbb{F}_s$ and so $(\frac{p^*}{s}) = 1$.

Conversely, assume that $(\frac{p^*}{s}) = 1$.  Then, as above, \d
\sum^{(p-3)/2}_{i=0} \zeta^{g^{2i}}\in\mathbb{F}_s$.  We show that the
Frobenius automorphism $\varphi:x\to x^s$ on $\overline{\mathbb{F}_s}$
permutes the elements of the set $\{\zeta,\zeta^{g^2},\zeta^{g^4},\dots
,\zeta^{g^{p-3}}\}$. Then using Theorem 2.5 we conclude that $\rho^G$
is realizable over $\mathbb{F}_s$.

To do this, it is sufficient by Lemma 2.2, to show that
$\varphi(\zeta) = \zeta^{g^{2j}}$ for some $j\in\{0,1,\dots ,\frac{p-3}{2}\}$.
Now \d 1 + \sum^{(p-3)/2}_{i=0} \zeta^{g^{2i}} + \sum^{(p-3)/2}_{i=0}
\zeta^{g^{2i+1}} = 0$.  If $\phi(\zeta) = \zeta^{g^{2j+1}}$, then \d
\sum^{(p-3)/2}_{i=0} \zeta^{g^{2i}} = \sum^{(p-3)/2}_{i=0} \zeta^{g^{2i+1}}$
whence \d 1 + 2 \sum^{(p-3)/2}_{i=0} \zeta^{g^{2i}} = \pm\sqrt{p^*} = 0$. Thus
$p^* = 0$, contradicting the hypothesis that $p$ and $s$ are distinct primes.
Therefore, $\rho^G$ is realizable over $\mathbb{F}_s$ and Corollary \ref{Gauss} follows
immediately.
\end{proof}

\noindent
{\bf Remarks.}
(1)  The method of evaluation \d \sum_{i=0}^{\frac{p-3}{2}}
\zeta^{g^{2i}}$ given in \cite[Section 5.15]{jac} can be simplified and
clarified as follows:

Let $Q$ be the set of non-zero quadratic residues mod $p$ and let $N$ be the set of
nonquadratic residues mod $p$.  If $\zeta$ is a primitive $p^{th}$ root of
unity in $\overline{\mathbb{F}_s}$ we have:
\[ (i) \; \sum_{x\in Q} \zeta^{-x} = \sum_{x\in Q} \zeta^x \quad \mbox{and }
\sum_{x\in N} \zeta^x = \sum_{x\in N} \zeta^{-x} \quad \mbox{if } p\equiv
1(4)\; ; \]
\[ (ii) \; \sum_{x\in Q} \zeta^{-x} = \sum_{x\in N} \zeta^x \quad \mbox{and }
\sum_{x\in N} \zeta^{-x} = \sum_{x\in Q} \zeta^{x} \quad \mbox{if } p\equiv
-1(4)\; . \]

Now let \d c = \sum_{x\in Q} \zeta^x$ and let $G = \langle a,b\mid a^p =
1 = b^{\frac{p-1}{2}}, b^{-1}ab = a^{g^2}\rangle$ where $g$ is a
primitive root mod $p$.  Let $\rho_1$ and $\rho_2$ be the
representations of $H = \langle a\rangle$ defined by $\rho_1(a) = \zeta$
and $\rho_2(a) = \zeta^g$ and let $\chi_1$ and $\chi_2$ denote their
respective characters.  Since $|g^2|_p = \frac{p-1}{2} = |b|$,
it follows by Proposition 2.1 (1) that $\rho^G_1$ and $\rho^G_2$
are irreducible.  Moreover, since $\rho^G_1(a)$ and $\rho^G_2(a)$ have
different eigenvalues, they are inequivalent.  Hence by Schur's Lemma
and Frobenius reciprocity we have

\[0 = (\chi^G_2 ,\chi^G_1) = (\chi_2 ,(\chi^G_1)_H)\; .\]
But
\[(\chi_2 ,(\chi^G_1)_H) = \frac{1}{|H|}\left(\frac{p-1}{2} +
\zeta^g\sum_{x\in Q} \zeta^{-x} + \zeta^{g^2}\sum_{x\in N} \zeta^{-x} +
\zeta^{g^3}\sum_{x\in Q} \zeta^{-x} +\cdots + \zeta^{g^{p-1}}\sum_{x\in
N}\zeta^{-x}\right)\; . \]

{\bf Case (i). } $p\equiv 1(4)$.  By (i) above we have
\[ 0 = \frac{p-1}{2} + \zeta^g c + \zeta^{g^2}(-1 - c) + \zeta^{g^3} c
+\cdots + \zeta^{g^{p-1}}(-1 - c). \]
Hence
\[ \frac{1-p}{2} = c(-1 - c) + c(-1 - c) \]
and so
\[ 4c^2 + 4c + 1 - p = 0 \; . \]
Therefore
\[ c = \frac{-1\pm\sqrt{p}}{2}\; . \]

{\bf Case (ii). } $p\equiv -1(4)$.  By (ii) above we have
\[ 0 = \frac{p-1}{2} + \zeta^g(-1 - c) + \zeta^{g^2}c +\cdots +
\zeta^{g^{p-1}}c\; . \]
Hence
\[ \frac{p-1}{2} + (-1 - c)^2 + c^2 = 0 \]
yielding
\[ 4c^2 + 4c + p + 1 = 0\; . \]
Therefore
\[ c = \frac{-1\pm\sqrt{-p}}{2}\; . \]

We may combine the two solutions by setting $p^* = (-1)^{p-1/2}p$.  Then
$c = \frac{-1\pm\sqrt{p^*}}{2}$.

(2)  Assume that $p$ is an odd prime such that $p\equiv 1(n)$.  Let $g$
be a primitive root modulo $p$ and let $G(p,n) = \langle a,b\mid a^p = 1
= b^{(p-1)/n}, b^{-1}ab = a^{g^n}\rangle$.  The group $G(p)$ of the
previous example is the group $G(p,2)$ in this notation.  Observe that
$|g^n|_p = \frac{p-1}{n} = |b|$ and so, once again,
Theorem 2.5 is applicable.  We find immediately that the representation
$\rho^{G(p,n)}$ is realizable over $\mathbb{F}_s$ iff $s$ has an $n^{th}$
root in $\mathbb{F}_p$.  In this case again we see a connection with
higher reciprocity laws (see, for example [I-R], [Ank]) which we plan to
investigate in a subsequent paper.

\begin{example}
In his paper [Syl], Sylvester discovered that if $\zeta$ is a primitive
m$^{th}$ root of unity in $\overline{\mathbb{F}_s}$, $(s,m) = 1$, then
$2 cos\frac{2\pi}{m}\; : = \zeta + \zeta^{-1}$ belongs to
$\mathbb{F}_s$ iff $s\equiv\pm 1(m)$.  Sylvester's formulation of his
result differs somewhat from that just stated, but is equivalent to it.
In the paper [M-R] this result was re-discovered, generalized and
applied to some questions concerning extensions of degree $2^{l}$.

Here we shall strengthen Sylvester's result by applying our Theorem 2.5
to the dihedral group.  As usual, set $D_{2m} = \langle a,b\mid a^m = 1 =
b^2, b^{-1}ab = a^{-1}\rangle, m\in\mathbb{N}, m\geq 3$.  Then
$|-1|_m = 2 = |b|$ and so our theorem is applicable.  We have
immediately
\end{example}
\begin{theorem}
The $2$-dimensional representation $\rho$ of $D_{2m}$ given by
\[ \rho(a) = \left( \begin{array}{cc}
\zeta & 0 \\
0 & \zeta^{-1}\end{array}\right)\; , \quad \rho(b) =
\left( \begin{array}{cc}
0 & 1 \\
1 & 0 \end{array}\right) \]
where $\zeta$ is a primitive $m^{th}$ root of unity is realizable over
the field $\mathbb{F}_q$, $q$ a power of the odd prime $s$, iff
$q\equiv\pm 1(m)$.  Moreover, in the case when $\rho$ is realizable over
$\mathbb{F}_q$ we can write the corresponding matrices with entries in
$\mathbb{F}_q$ explicitly.  Indeed:

(1)  if $q\equiv 1(m)$, then $\zeta\in\mathbb{F}_q$ and the original
matrices lie in $\mathbb{F}_q$;

(2)  if $q\equiv -1(m)$, then $\zeta\notin\mathbb{F}_q$ but $t = \zeta +
\zeta^{-1}\in\mathbb{F}_q$ and $\rho$ is equivalent to the representation
$\theta$ given by
\[ \theta (a) = \left(\begin{array}{cc}
0 & 1 \\
1 & t \end{array}\right)\; , \quad \theta (b) =
\left(\begin{array}{rr}
1 & t \\
0 & -1 \end{array}\right)\; . \]
\end{theorem}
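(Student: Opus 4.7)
\medskip

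\noindent
\textbf{Proof plan.} The plan is to invoke Theorem~2.5 for the ``iff'' part and then produce the explicit matrices in each residue class.

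First, I observe that the representation $\rho$ in the statement is exactly the representation $\rho^G$ produced in Proposition~2.1 for the dihedral group, i.e., with $G=D_{2m}$, $k=-1$, and $n=2$. For $m\geq 3$ we have $|-1|_m=2=|b|$, so Theorem~2.5 applies directly and yields: $\rho$ is realizable over $\mathbb{F}_q$ iff $q\equiv (-1)^i\pmod m$ for some $i\in\{0,1\}$, which is precisely $q\equiv \pm 1\pmod m$. This takes care of the first assertion.

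Next I would dispose of part (1). If $q\equiv 1\pmod m$, the Frobenius $\tau(x)=x^q$ fixes $\zeta$, so $\zeta\in\mathbb{F}_q$ and $\rho(a),\rho(b)$ already lie in $M_2(\mathbb{F}_q)$. There is nothing further to do.

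For part (2), suppose $q\equiv -1\pmod m$. Then $\tau$ swaps $\zeta$ with $\zeta^{-1}$, so $t=\zeta+\zeta^{-1}$ is Frobenius-fixed and lies in $\mathbb{F}_q$, while $\zeta\notin\mathbb{F}_q$ for $m>2$. My plan for the explicit form is to verify directly that $\theta$ defines a representation of $D_{2m}$ equivalent to $\rho$. Concretely, I would check the three items: (a) $\theta(b)^2=I$, which is immediate from the shape of $\theta(b)$; (b) $\theta(b)\,\theta(a)\,\theta(b)=\theta(a)^{-1}$, which is a direct $2\times 2$ calculation using $t^2-t\cdot t+\text{(const)}$ cancellations (this is the main bookkeeping step); and (c) $\theta(a)^m=I$, which I would obtain by computing the characteristic polynomial of $\theta(a)$ and observing that it equals $x^2-tx+1=(x-\zeta)(x-\zeta^{-1})$, so the eigenvalues of $\theta(a)$ over $\overline{\mathbb{F}}_s$ are $\zeta,\zeta^{-1}$ and hence $\theta(a)$ has order $m$. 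Once the relations hold, $\theta$ is an $\mathbb{F}_q$-representation of $D_{2m}$ with the same character as $\rho$ (matching on $\langle a\rangle$ by the eigenvalue computation, and $0$ on the coset $b\langle a\rangle$ since both $\rho(a^ib)$ and $\theta(a^ib)$ are easily seen to have trace $0$). Both representations are $2$-dimensional and irreducible (the diagonal $\rho(a)$ has no eigenvector in common with the swap $\rho(b)$; similarly for $\theta$ since $\theta(a)$ is not diagonal), so equality of characters forces $\theta\sim\rho$.

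The main obstacle I anticipate is the matrix algebra in step (b), which must correctly use the identity $t=\zeta+\zeta^{-1}$ and the implicit relation $\zeta^2-t\zeta+1=0$ satisfied by the eigenvalues. Alternatively, one could proceed algorithmically by following the Vandermonde construction in Lemma~2.4: form $V=\bigl(\begin{smallmatrix}1&\zeta\\ 1&\zeta^{-1}\end{smallmatrix}\bigr)$, compute $V^{-1}\rho(a)V$ and $V^{-1}\rho(b)V$, and observe by Lemma~2.3 that the resulting matrices lie in $M_2(\mathbb{F}_q)$; a further conjugation over $\mathbb{F}_q$ then brings them into the precise form $\theta$ displayed in the statement.
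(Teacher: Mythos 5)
Your framework — pull the ``iff'' from Theorem~2.5, dispose of case (1) trivially, then for case (2) either verify the dihedral relations directly or run the Vandermonde construction of Lemmas~2.3/2.4 — is the right strategy and matches what the paper does (the paper merely says the explicit matrices ``follow the proof of Theorem~2.5,'' i.e.\ your Vandermonde alternative). However, the ``main bookkeeping step'' you deferred is exactly where your argument would collapse if carried out: the matrix $\theta(a)=\left(\begin{smallmatrix}0&1\\1&t\end{smallmatrix}\right)$ has determinant $-1$ and characteristic polynomial $x^2-tx-1$, \emph{not} $x^2-tx+1$ as you assert in step (c). Consequently its eigenvalues are not $\zeta,\zeta^{-1}$, it does not have order $m$, and the relation $\theta(b)\theta(a)\theta(b)=\theta(a)^{-1}$ you claim in step (b) fails: one finds $\theta(b)\theta(a)\theta(b)=\left(\begin{smallmatrix}t&-1\\-1&0\end{smallmatrix}\right)$ while $\theta(a)^{-1}=\left(\begin{smallmatrix}-t&1\\1&0\end{smallmatrix}\right)$.

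The resolution is that the theorem statement contains a sign slip: running your own alternate route (change basis by $V=\left(\begin{smallmatrix}1&\zeta\\1&\zeta^{-1}\end{smallmatrix}\right)$, then apply Lemma~2.3) gives $V^{-1}\rho(a)V=\left(\begin{smallmatrix}0&-1\\1&t\end{smallmatrix}\right)$ — the companion matrix of $x^2-tx+1$, determinant $1$ — and $V^{-1}\rho(b)V=\left(\begin{smallmatrix}1&t\\0&-1\end{smallmatrix}\right)$, and with this corrected $\theta(a)$ all of your checks (a)--(c) go through: $\theta(b)^2=I$, $\theta(b)\theta(a)\theta(b)=\left(\begin{smallmatrix}t&1\\-1&0\end{smallmatrix}\right)=\theta(a)^{-1}$, and the characteristic polynomial is $(x-\zeta)(x-\zeta^{-1})$. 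So your structure is sound, but you asserted the pivotal computations as obvious rather than doing them, and in doing so endorsed a false identity. Had you actually executed step (b) or (c) you would have caught the error and been led to the correct matrix; the lesson is that ``a direct $2\times2$ calculation'' in a proof needs to be a calculation, not a gesture at one.
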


One can find the matrices over $\mathbb{F}_q$ following the proof of Theorem
2.5.  From the explicit form of the matrices we deduce immediately that $\zeta
+ \zeta^{-1}\in\mathbb{F}_q$ iff $q\equiv\pm 1(m)$ which is indeed a
generalization of Sylvester's result.  This theorem does give us additional
information, namely, that if $q\equiv\pm 1 (m)$, then, not only is $\zeta +
\zeta^{-1}\in\mathbb{F}_q$, but the whole representation $\rho$ is realizable
over $\mathbb{F}_q$ and moreover, explicit formulas for the matrices $\rho(g),
g\in D_{2m}$ can be computed.

\section{Connections with cross-products}

The methods used in the previous sections are of an elementary nature
but may appear somewhat mysterious to the reader.  The veil of mystery
lifts however and we gain considerable insight into our computations
once we establish a connection with crossed products (see for example
\cite{Her} or \cite{rei}).  Moreover, guided by this connection with crossed products
we are able to obtain a stronger result concerning complete
realizability (cf. Definition 4.5).

Roughly speaking, cross-products intervened in the following manner.
We consider a finite field $F=\mathbb{F}_q$ over which we want to
construct a representation module $M$
for our group $G$ which realizes a given component $\rho_i,i=0,1,\dots,
r-1$ of $\rho^G$. (See Proposition~2.1.)  More precisely, we want to
verify that for $L=F(\zeta)$, where $\zeta$ is a primitive $m^{th}$ root of
unity, the action of $a,b \in G$ on $L \otimes_F M$ has the required
form described in Proposition~2.1(2) with respect to a suitable basis
of $L \otimes_F M$.

Now a hint on how to construct the required representation module
$M$ is obtained via the cross-product $A=(L/F,\mathcal{G},f)$ with
a trivial factor set $f$ (see \cite[Chapter~4]{Her} and Example~4.1
below) in the case when $[L:F]=t,q\equiv k$ (mod $m$), $q\equiv 1$
(mod $r$), and $(r,m)=1$. (In the notation of Proposition~2.1.)

Then the idea is to choose $M\cong L$ as $F$-vector spaces and use
two facts.
\begin{enumerate}
  \item We can embed our group $G$ into $A$.
  \item There exists an isomorphism $\varphi\colon A \to \Hom_F
  (L,L)$.
\end{enumerate}

Then using $\varphi$ restricted to $G$ embedded in $A$, we obtain
a representation $G$ on $M$.

Thus we can say that our cross-product $A$ guides us to make the
specific representation of $G$ on $M$ described in the first
paragraph of our proof of Theorem~4.5.

We should point out, however, that we only use the cross-product
construction as a guide for building a representation
$M$, and our further exposition is logically independent of this
construction.  Nevertheless it seems to us worthwhile to include
at least this idea, and to explain it in a detailed way in Example~4.1
below.  One could say that if we were to follow C. F. Gauss's style
of exposition, we would dismantle the scaffolding upon completion
of the building. We have instead tried to follow L.~Euler, by
leaving the scaffolding intact.

We begin with an example which points the way in the general case.
\begin{example}
Let $G = \langle a,b\mid a^7 = 1 = b^9, b^{-1}ab = a^2\rangle$.  Let $F$
be a finite field of order $q$ where $q\equiv 2(7)$ and $q\equiv 1(3)$
(for example, $q = 37$ would do).  Let $\zeta$ be a primitive 7$^{th}$
root of unity in $\overline{F}$.  In this case $|2|_7 = 3$ and
so, using the notation established in Proposition 2.1, $r = t = 3$ and
$\rho^G$ is the direct sum of the representations
\[ a\to \left(\begin{array}{lll}
\zeta & 0 & 0 \\
0 & \zeta^2 & 0 \\
0 & 0 & \zeta^4 \end{array}\right) \quad\quad
b\to \left(\begin{array}{ccc}
0 & 0 & \eta \\
1 & 0 & 0 \\
0 & 1 & 0 \end{array}\right) \]
as $\eta$ varies over the third roots of unity in $F$.  Let $L =
F(\zeta)$.  Then $\mathscr{G} = Gal(L/F) = \langle\sigma\rangle$ where
$\sigma(\zeta) = \zeta^2$.  Form the crossed product $A =
(L/F,\mathscr{G},f)$ with trivial factor set $f$ (cf. \cite[Chapter 4]{Her}).  Recall
that $A$ is a 3-dimensional algebra over $L$ with basis $1 =
u_1,u_{\sigma},u_{{\sigma}^2}$ and multiplication defined according to
the following:

(i)  $u_{\tau}u_{\nu} = u_{\tau\nu}\; ; \quad\quad\quad$ (ii)  $u_{\tau}\ell =
\tau(\ell)u_{\tau}\;$.

It is easily checked (cf. \cite[Chapter 7, Section 29]{rei}) that the map
$\varphi : A\to \Hom_F(L,L)$ defined by
\[ [\varphi (\ell_0 + \ell_1u_{\sigma} +
\ell_2u_{{\sigma}^2})](\lambda) = \ell_0\lambda + \ell_1\sigma(\lambda) +
\ell_2\sigma^2(\lambda) \]
for all $\lambda\in L$ is an $F$-algebra
isomorphism.

Now let $\eta$ be a third root of unity in $F$ and choose $z\in L$ such that
$N_{L/F}(z) = \eta$.  Here $N_{L/F}$ is the norm map from $L$ down to $F$. Let
$\psi : G\to A$ be the homomorphism defined by
\[ \psi(a) = \zeta,\quad \psi(b) = zu_{{\sigma}^2}\; . \]
Since $\zeta^7 = 1$, $(zu_{{\sigma}^2})^3 = \eta$ and
$(zu_{{\sigma}^2})\zeta (zu_{{\sigma}^2})^{-1} = \zeta^4$ it follows that
$\psi$ is indeed a well-defined homomorphism. (From the last identity
it follows that $(zu_{\sigma^2})^{-1}\zeta(zu_{\sigma^2})=\zeta^2$.)

Let $f = (x-\zeta)(x-\zeta^2)(x-\zeta^4)\in F[x]$.  Then $f$ is irreducible
polynomial since $\mathscr{G}$ acts transitively on $\{\zeta, \zeta^2, \zeta^4
\}$. Hence $L$ is isomorphic with $M:= \frac{F[x]}{(f)}$, where we send
$\zeta$ to $\bar{x}$, the class of $x$ mod $f$.
  We now define an action of $G$ on $M$ via
$\psi$ by
\end{example}
\begin{eqnarray*}
ag(\overline{x}) & = & \overline{x}g(\overline{x}) \\
bg(\overline{x}) & = & z(\overline{x})g(\overline{x}^4)
\end{eqnarray*}
where $z(\overline{x})$ corresponds to $z$.  This clearly turns $M$ into
an $FG$-module since $\psi$ is a homomorphism.

Let $M^L = L\otimes_F\frac{F[x]}{(f)} = \frac{L[x]}{(f)}$.  We show that
relative to an appropriate basis, $M^L$ affords the matrix
representation
$a\to \left(\begin{array}{lll}
\zeta & 0 & 0 \\
0 & \zeta^2 & 0 \\
0 & 0 & \zeta^4 \end{array}\right)\; ,\quad b\to
\left(\begin{array}{ccc}
0 & 0 & \eta \\
1 & 0 & 0 \\
0 & 1 & 0 \end{array}\right)\; .$
Indeed, let
\[ g_0 = \frac{(\overline{x} - \zeta^2)(\overline{x} - \zeta^4)}{(\zeta
- \zeta^2)(\zeta - \zeta^4)}\; ,\quad g_1 = \frac{z(\overline{x})(\overline{x}
- \zeta)(\overline{x} - \zeta^4)}{(\zeta^2 - \zeta)(\zeta^2 - \zeta^4)}\;
,\quad g_2 = \frac{z(\overline{x})z(\overline{x}^4)(\overline{x} -
\zeta)(\overline{x} - \zeta^2)}{(\zeta^4 - \zeta)(\zeta^4 - \zeta^2)}\; . \]
Using the equality $\bar{f} = 0$ in $M$, we see that
\[ ag_0 = \zeta g_0\; ,\quad ag_1 = \zeta^2g_1\quad\mbox{and}\quad ag_2
= \zeta^4g_2\; . \]
Moreover,
\[ bg_0(\overline{x}) = z(\overline{x})g_0(\overline{x}^4) =
\frac{z(\overline{x})(\overline{x}^4 - \zeta^2)(\overline{x}^4 -
\zeta^4)}{(\zeta - \zeta^2)(\zeta - \zeta^4)}\; . \]

But $\frac{(x^4 - \zeta^2)(x^4 - \zeta^4)}{(\zeta - \zeta^2)(\zeta -
\zeta^4)}\;$ and $\;\frac{(x - \zeta)(x - \zeta^4)}{(\zeta^2 - \zeta)(\zeta^2
- \zeta^4)}$ evaluated at $\zeta$, $\zeta^2$ and $\zeta^4$ both yield 0, 1, 0
respectively, thus showing that $bg_0 = g_1$.  Similarly $bg_1 = g_2$ and
$bg_2 = \eta g_0$ since $z(\overline{x})z(\overline{x}^4)z(\overline{x}^2) =
\eta$.  It follows that the given representation is realizable over $F$
because  $M$ is our desired $3$-dimensional representation space over $F$.

The above example is representative of the proof we are about to embark
upon for the general case, except that, in the general case we need a
technical maneuver to deal with the possibility that $f$ is reducible
over $F$ and that conjugation of $a$ by $b$ does not reflect the action
of the Galois group on $\zeta$.  In the example above $f$ is irreducible
over $F$ and $b^{-1}ab = a^2$ reflects the fact that $\sigma(\zeta) =
\zeta^2$.

We begin with a simple lemma.

\begin{lemma}
Let $F$ be a field and $f$ a monic polynomial over $F$.  Assume $f$ has
distinct roots $\zeta_1,\zeta_2,\dots ,\zeta_t$ in some splitting field
and let $d$ be a positive integer such that $\{\zeta_1,\zeta_2,\dots
,\zeta_t\} = \{\zeta^d_1,\zeta^d_2,\dots ,\zeta^d_t\}$.  Then $f(x)\mid
f(x^d)$.
\end{lemma}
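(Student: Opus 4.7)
The plan is to factor $f$ over a splitting field and show that every root of $f$ is also a root of $f(x^d)$, which by distinctness of the roots will give the divisibility we want.

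First I would write $f(x) = \prod_{i=1}^{t}(x - \zeta_i)$ in the splitting field, which is allowed because $f$ is monic with the $\zeta_i$ as its (distinct) roots. Then for each $i$, the hypothesis that $\{\zeta_1^d, \ldots, \zeta_t^d\} = \{\zeta_1, \ldots, \zeta_t\}$ tells me that $\zeta_i^d = \zeta_{\pi(i)}$ for some permutation $\pi$ of $\{1, \ldots, t\}$, so $f(\zeta_i^d) = f(\zeta_{\pi(i)}) = 0$. Hence every $\zeta_i$ is a root of the polynomial $f(x^d)$.

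At this point I would conclude that, working in the splitting field, $\prod_{i=1}^{t}(x - \zeta_i)$ divides $f(x^d)$, using that the $\zeta_i$ are distinct so no multiplicity issues arise. To descend the divisibility back to $F[x]$, I would apply the division algorithm inside $F[x]$ to write $f(x^d) = f(x)q(x) + r(x)$ with $q, r \in F[x]$ and $\deg r < t$; since the same equation holds over the splitting field and each $\zeta_i$ is a root of both $f(x^d)$ and $f(x)$, the remainder $r$ has $t$ distinct roots but degree less than $t$, forcing $r = 0$. This gives $f(x) \mid f(x^d)$ in $F[x]$, as desired.

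There is no real obstacle here; the only mild subtlety is making sure the division lives in $F[x]$ rather than only in the splitting field, which is handled cleanly by the division algorithm argument above.
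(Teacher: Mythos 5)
Your proof is correct and is essentially the paper's argument: both show that each $\zeta_i$ is a root of $f(x^d)$ (the paper by rewriting $f(x^d)=\prod_i(x^d-\zeta_i^d)$, you by noting $\zeta_i^d$ is some $\zeta_{\pi(i)}$) and then invoke distinctness of the roots to obtain $f(x)\mid f(x^d)$. Your extra paragraph on descending the divisibility to $F[x]$ via the division algorithm just spells out the standard fact the paper leaves implicit; it is not a different route.
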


\begin{proof}
\d f(x^d) = \prod_i (x^d - \zeta_i) = \prod_i (x^d - \zeta^d_i)$.  Clearly
each $\zeta_i$ is a root of $f(x^d)$ and so $f(x) \mid f(x^d)$.
\end{proof}

\begin{corollary}
With the same hypotheses and notation as above, if $g(x)$ and $h(x)$ are
polynomials over $F$ and $g(x)\equiv h(x)(f)$, then $g(x^d)\equiv
h(x^d)(f)$.
\end{corollary}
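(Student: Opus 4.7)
The plan is to deduce this directly from Lemma~4.2 by a one-line substitution argument. The hypothesis $g(x) \equiv h(x) \pmod{f}$ says precisely that $f(x)$ divides $g(x) - h(x)$ in $F[x]$, so we may write $g(x) - h(x) = f(x)\, q(x)$ for some $q(x) \in F[x]$.

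The key step is then to substitute $x^d$ for $x$ in this polynomial identity, which gives $g(x^d) - h(x^d) = f(x^d)\, q(x^d)$. By Lemma~4.2 (whose hypotheses are exactly the hypotheses we have inherited), $f(x)$ divides $f(x^d)$ in $F[x]$, hence $f(x)$ divides $f(x^d)\, q(x^d) = g(x^d) - h(x^d)$. This is the desired congruence $g(x^d) \equiv h(x^d) \pmod{f}$.

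There is no real obstacle here; the corollary is essentially a formal consequence of Lemma~4.2, using only that divisibility in $F[x]$ is preserved under multiplication by $q(x^d)$. The only thing worth noting is that the substitution $x \mapsto x^d$ is a well-defined $F$-algebra endomorphism of $F[x]$, so the identity $g(x) - h(x) = f(x)\,q(x)$ does transport faithfully to $g(x^d) - h(x^d) = f(x^d)\,q(x^d)$.
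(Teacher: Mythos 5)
Your proof is correct and matches the paper's proof essentially line for line: both write the hypothesis as $f(x)\mid g(x)-h(x)$, substitute $x\mapsto x^d$ to obtain $f(x^d)\mid g(x^d)-h(x^d)$, and then invoke Lemma~4.2 to pass from $f(x^d)$ to $f(x)$ via transitivity of divisibility. You merely name the quotient $q(x)$ explicitly, which the paper leaves implicit.
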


\begin{proof}
$f(x)\mid (g(x) - h(x))\Longrightarrow f(x^d)\mid (g(x^d) - h(x^d))$.  By the
above lemma we have $f(x)\mid (g(x^d) - h(x^d))$.
\end{proof}

The next lemma is the technical maneuver referred to above.

\begin{lemma}
Let $s$ be a prime, $m$ a positive integer with $(m,s) = 1$ and let $q$ be a
power of $s$.  Let $k$ be a positive integer and let $\zeta$ be a primitive
$m^{th}$ root of unity in $\overline{\mathbb{F}_s}$.  Assume that $|k|_m = t$
and that $q\equiv k^j(m)$ for some $j$.  Let $f = (x - \zeta)(x -
\zeta^k)\cdots (x - \zeta^{k^{t-1}})$ and let $\eta\in F = \mathbb{F}_q$.
Then there exists $z(x)\in F[x]$ such that $z(x)z(x^k)\cdots
z(x^{k^{t-1}})\equiv \eta(f)$.
\end{lemma}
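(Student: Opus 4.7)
\bigskip
\noindent
\textbf{Proof plan.} The plan is to translate the desired polynomial identity into a statement about the norm map of a finite field extension and then invoke surjectivity of that norm.

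Set $L = F(\zeta)$, so that $\mathrm{Gal}(L/F)$ is generated by the Frobenius $\phi(x) = x^q$; since $q \equiv k^j \pmod m$ one has $\phi(\zeta^{k^i}) = \zeta^{k^{i+j}}$. Thus each $\phi$-orbit on the roots of $f$ has size $|q|_m = t/d$, where $d = \gcd(j,t)$, and there are exactly $d$ such orbits. Consequently $f$ factors over $F$ as $f = f_0 f_1 \cdots f_{d-1}$ into $d$ irreducibles of degree $t/d$, and the Chinese Remainder Theorem furnishes an $F$-algebra isomorphism
\[
R \;:=\; F[x]/(f) \;\xrightarrow{\;\sim\;}\; L^d, \qquad g(x) \longmapsto \bigl(g(\beta_0), g(\beta_1), \dots, g(\beta_{d-1})\bigr),
\]
where $\beta_s := \zeta^{k^s}$ for $0 \leq s \leq d-1$ is a set of orbit representatives.

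Next I would show that $\Phi(z) := z(x)\, z(x^k) \cdots z(x^{k^{t-1}})$ reduces modulo $f$ to a scalar. Evaluating $\Phi(z)$ at any root $\zeta^{k^r}$ yields $\prod_{i=0}^{t-1} z(\zeta^{k^{r+i}})$, and the reindexing $r+i \mapsto i \pmod t$ shows this value is independent of $r$. So the lemma is equivalent to the single equation
\[
c(z) \;:=\; \prod_{i=0}^{t-1} z\bigl(\zeta^{k^i}\bigr) \;=\; \eta .
\]
Grouping the factors along Frobenius orbits --- each orbit consisting of the $t/d$ conjugates of some $\beta_s$ under $\phi$ --- rewrites $c(z)$ as a product of norms:
\[
c(z) \;=\; \prod_{s=0}^{d-1} N_{L/F}\bigl(z(\beta_s)\bigr) .
\]

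The final step exploits the surjectivity of $N_{L/F}\colon L^* \to F^*$ for the cyclic extension $L/F$ of finite fields. If $\eta \neq 0$, pick $z_0 \in L$ with $N_{L/F}(z_0) = \eta$ and set $z_s = 1$ for $1 \leq s \leq d-1$; if $\eta = 0$, take $z_0 = 0$. By CRT there exists $z(x) \in F[x]$ with $z(\beta_s) = z_s$ for every $s$, and this polynomial satisfies the required congruence. The only real obstacle is the bookkeeping in the orbit-to-norm step --- one must check carefully that each Frobenius orbit on the roots of $f$ really does realize a complete set of $L/F$-conjugates of $z(\beta_s)$, so that the inner product in the display above is an honest norm. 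Once that is in place, surjectivity of the norm closes the argument immediately.
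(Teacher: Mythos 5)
Your proof is correct and follows the same strategy as the paper's: factor $f$ into $F$-irreducibles, recognize the product of $z$ over a Frobenius orbit on the roots as an $L/F$-norm, invoke surjectivity of the norm map for finite fields, and assemble $z(x)$ via the Chinese Remainder Theorem. The only difference is cosmetic---the paper writes out the CRT lift explicitly through the idempotents $h_i(x)\widehat{f}_i(x)$, setting the value on every factor other than the one containing $\zeta$ equal to $1$, which is exactly your choice $z_s = 1$ for $s \geq 1$.
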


\begin{proof}
The set $\{\zeta,\zeta^k,\dots ,\zeta^{k^{t-1}}\}$ of roots of $f$ in $L =
F(\zeta)$ is invariant under the Frobenius automorphism $a\to a^q$ since
$q\equiv k^j(m)$.  Hence $f\in F[x]$.  Let $f = f_1f_2\dots f_u$ be the
factorization of $f$ into irreducible factors over $F$ and let $S$ (resp.
$\widehat{S}$) denote the set of roots of $f_1$ (resp. $f_2f_3\dots f_u$).
Assume that $\zeta \in S$. Let $\widehat{f}_i = f/f_i, i = 1,2, \cdots , u$.
Since $(\widehat{f}_i,f_i) = 1$, there exists $h_i\in F[x]$ such that
$h_i\widehat{f}_i\equiv 1(f_i)$. Now $\frac{F[x]}{(f_1)}$ is isomorphic with
$L$ where $\bar{x}$ (the element $x$ mod $f_1$) plays the role of $\zeta$.
Further there exists $z\in L$ such that $N_{L/F}(z) = \eta$. Hence there
exists $z_1(x)\in F[x]$ such that \d \prod_{\xi\in S} z_1(\xi ) = \eta$. Now
let
\[ z(x) = h_1(x)\widehat{f}_1(x)z_1(x) + h_2(x)\widehat{f}_2(x) +\cdots +
h_u(x)\widehat{f}_u(x). \]
We observe that if $\alpha$ is a root of
$f_i$, $h_i(\alpha)\widehat{f}_i(\alpha) = 1$, while if $\alpha$ is a
root of $\widehat{f}_i$, $h_i(\alpha)\widehat{f}_i(\alpha) = 0$.  Now
let $\delta$ be a root of $f$.  We compute

\[ z(\delta)z(\delta^k)\dots z(\delta^{k^{t-1}}) = \prod_{\xi\in S}
z(\xi) \prod_{\xi\in\widehat{S}} z(\xi)\; . \]
If $\xi\in \widehat{S}$, (say $\xi$ is a root of $f_j$, $j\not= 1$),
then $z(\xi) = h_j(\xi)\widehat{f}_j(\xi) = 1$.  Thus
\[ z(\delta)z(\delta^k)\dots z(\delta^{k^{t-1}}) = \prod_{\xi\in S}
z(\xi)\; . \]
But if $\xi\in S$, $z(\xi) = z_1(\xi)$ and so
\[ z(\delta)z(\delta^k)\dots z(\delta^{k^{t-1}}) = \prod_{\xi\in S}
z_1(\xi) = \eta\; . \]
Hence $z(x)z(x^k)\dots z(x^{k^{t-1}})$ evaluated at any root of $f$
yields $\eta$.  It follows that
\[z(x)z(x^k)\dots z(x^{k^{t-1}})\equiv\eta(f).\]
\end{proof}
\begin{theorem}
Let $G = \langle a,b\mid a^m = b^n = 1, b^{-1}ab = a^k\rangle$ and let
$|k|_m = t$, $n = rt$.  Let $\zeta$ be a primitive $m^{th}$ root
of unity in $\overline{\mathbb{F}_s}$, $s$ a prime with $(s,m) = 1$.
Let $q$ be a power of $s$ and assume $q\equiv k^j(m)$ for some $j$.
Assume further that $F (= \mathbb{F}_q)$ contains a primitive $r^{th}$
root of unity $\eta$.  Then, for each integer $c$, the representation of
$G$ defined by
\[ a\to \left(\begin{array}{cccc}
\zeta & & & 0 \\
& \zeta^k \\
& & \ddots \\
0 &  & & \zeta^{k^{t-1}} \end{array}\right)\; ,\quad b \to
\left(\begin{array}{ccccc}
0 & 0 & \cdots & 0 & \eta^c \\
1 & 0 & \cdots & 0 & 0 \\
0 & 1 & \cdots & \vdots & \vdots \\
\vdots & \vdots & \cdots & 0 \\
0 & 0 & \cdots & 1 & 0 \end{array}\right) \]
is realizable over $F$.
\end{theorem}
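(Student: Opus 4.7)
My plan is to follow the template of Example~4.1 but in the general setting, using Lemma~4.4 in place of the explicit crossed-product construction. First I would invoke Lemma~2.2 to note that, since $q \equiv k^j \pmod{m}$, the Frobenius $x \mapsto x^q$ permutes the set $\{\zeta^{k^i} : 0 \le i \le t-1\}$, and hence
\[ f(x) := \prod_{i=0}^{t-1}(x - \zeta^{k^i}) \]
lies in $F[x]$. Then I would apply Lemma~4.4 with $\eta^c$ in place of $\eta$ to produce a polynomial $z(x) \in F[x]$ such that $z(x)\, z(x^k) \cdots z(x^{k^{t-1}}) \equiv \eta^c \pmod{f}$.

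Next I would set $M := F[x]/(f)$, a $t$-dimensional $F$-vector space, and define an $FG$-module structure by
\[ a \cdot \bar g(\bar x) = \bar x\, \bar g(\bar x), \qquad b \cdot \bar g(\bar x) = z(\bar x)\, \bar g(\bar x^{k'}), \]
where $k'$ is an integer with $k k' \equiv 1 \pmod m$. The substitution $\bar x \mapsto \bar x^{k'}$ is well-defined on $M$ by Corollary~4.3, since the root set of $f$ is invariant under raising to the $k'$th power. The relation $a^m = 1$ holds because $f \mid x^m - 1$; the relation $b^{-1}ab = a^k$ follows from $k k' \equiv 1 \pmod m$; and iterating $b$ shows that $b^t$ acts on $M$ as multiplication by $z(\bar x)\, z(\bar x^{k'}) \cdots z(\bar x^{(k')^{t-1}})$, which (because $\{(k')^i\}$ and $\{k^i\}$ coincide as subsets of $(\mathbb{Z}/m\mathbb{Z})^*$) equals $\eta^c$ by our choice of $z$. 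Hence $b^n = b^{rt} = \eta^{cr} = 1$ using $\eta^r = 1$.

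To identify this $FG$-module with the representation specified in the theorem, I would extend scalars to $L := F(\zeta)$ and build an explicit basis of $M^L := L \otimes_F M$ in the spirit of Example~4.1. Letting $e_j$ denote the Lagrange polynomial which is $1$ at $\zeta^{k^j}$ and $0$ at the other roots of $f$, set
\[ g_j := z(\bar x)\, z(\bar x^{k'}) \cdots z(\bar x^{(k')^{j-1}})\, e_j \qquad (0 \le j \le t-1). \]
A direct calculation over $L$ gives $a g_j = \zeta^{k^j} g_j$, $b g_j = g_{j+1}$ for $0 \le j \le t-2$, and $b g_{t-1} = \eta^c g_0$, which reproduces exactly the matrices in the statement.

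The main technical hurdle is this last step. One must verify that the $g_j$ are linearly independent over $L$---immediate once one notes that each $z(\zeta^{k^i})$ is nonzero, since their product is $\eta^c \ne 0$---and must establish the identity $e_{j-1}(\bar x^{k'}) = e_j(\bar x)$ in $M^L$, which reduces to a Kronecker-delta computation at each root of $f$. A point easy to get wrong, and already visible in Example~4.1 where $k^{-1} \equiv k^2 \pmod 7$, is the choice of exponent $k'$ rather than $k$ in the action of $b$; the crossed-product heuristic makes this choice forced, since in that heuristic $b$ corresponds to $z \cdot u_{\sigma^j}$ where $\sigma^j$ acts on $\zeta$ as the inverse of conjugation by $b$.
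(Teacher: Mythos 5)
Your proposal is correct and follows essentially the same route as the paper's own proof: take $M = F[x]/(f)$ with $a$ acting as multiplication by $\bar x$ and $b$ acting by $g(\bar x) \mapsto z(\bar x)\,g(\bar x^{k'})$ with $k' \equiv k^{-1} \pmod m$ and $z$ from Lemma~4.4, then extend scalars to $L=F(\zeta)$ and diagonalize via the Lagrange-interpolant basis scaled by cumulative products of $z$; the identification with the given matrices is verified exactly as you describe. Your added remarks (nonvanishing of each $z(\zeta^{k^i})$ for independence of the basis, and the reason the exponent must be $k^{-1}$ rather than $k$) are correct and match the paper's $\varphi(b^{-1})=[k]_m^{-1}$ convention.
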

\begin{proof}
Let $f = (x - \zeta)(x - \zeta^k)\dots (x - \zeta^{k^{t-1}})$.  As in
Lemma 4.4, $f\in F[x]$.  Let $M = \frac{F[x]}{(f)}$ and turn $M$ into
an $FG$-module by defining
\[ ag(\overline{x}) = \overline{x}g(\overline{x}) \]
and
\[ bg(\overline{x}) = z(\overline{x})g(\overline{x}^{\varphi(b^{-1})}) \]
where $z(x)$ is chosen as in Lemma 4.4 with respect to $\eta^c$ and $\varphi
:\langle b\rangle\to\langle [k]_m\rangle$ is the homomorphism defined by
$\varphi (b) = [k]_m$.  Recall that by $[k]_m$ we mean $k$ (mod $m$) and
naturally, by $\overline{x}^{[i]_m}$ we mean $\overline{x}^i$.  This is
independent of the representative of $[i]_m$ since $\overline{x}^m =
\overline{1}$.  Clearly the action of $a$ is well defined. That the action of
$b$ is well defined follows from Corollary 4.3. Straightforward computation
yields $b^ng(\overline{x}) = g(\overline{x})$ while it is obvious that
$a^mg(\overline{x}) = g(\overline{x})$.  In addition $abg(\overline{x}) =
\overline{x}z(\overline{x})g(\overline{x}^{\varphi (b^{-1})})$ while
\[ ba^kg(\overline{x}) = b\overline{x}^k g(\overline{x}) =
z(\overline{x})\overline{x}^{\varphi (b)\varphi(b^{-1})}g
(\overline{x}^{\varphi (b^{-1})}) = \overline{x}z(\overline{x})g
(\overline{x}^{\varphi (b^{-1})})\; . \] It now follows that we have a well
defined action of $G$ on $M$, thus turning $M$ into an $FG$-module.  Let $L =
F(\zeta)$ and let $M^L = L\otimes_F M = \frac{L[x]}{(f)}$. Then $M^L$ affords
the same matrix representation as $M$ relative to the basis
$\{1,\overline{x},\dots ,\overline{x}^{t-1}\}$.  We construct a basis
$\mathscr{B}$ of $\frac{L[x]}{(f)}$ such that relative to $\mathscr{B}$, the
matrix representation afforded by $M^L$ is the given representation.

Let $g_i(\overline{x}) = \frac{f(\overline{x})}{(\overline{x} - \zeta^{k^i})f'
(\zeta^{k^i})}\; ,\quad i = 0,1,\dots , t - 1$ and define
\[ h_0(\overline{x}) = g_0(\overline{x}),\quad h_1(\overline{x}) =
z(\overline{x})g_1(\overline{x}), \]
\[h_2(\overline{x}) =
z(\overline{x})z(\overline{x}^{\varphi(b^{-1})})g_2(\overline{x}),\dots
, h_{t-1}(\overline{x}) =
z(\overline{x})z(\overline{x}^{\varphi(b^{-1})})\cdots
z(\overline{x}^{\varphi(b^{-(t-2)})})g_{t-1}(\overline{x})\; . \]

First observe that $(\overline{x} - \zeta^{k^i})g_i(\overline{x}) =
\overline{0}$ and so $\overline{x}g_i(\overline{x}) =
\zeta^{k^i}g_i(\overline{x})$ whence $ah_i(\overline{x}) =
\zeta^{k^i}h_i(\overline{x})$ for $i = 0,1,\dots , t-1$.  Consider now
\[ bh_i(\overline{x}) =
z(\overline{x})h_i(\overline{x}^{\varphi(b^{-1})}) =
z(\overline{x})z(\overline{x}^{\varphi(b^{-1})})\cdots
z(\overline{x}^{\varphi(b^{-i})})g_i(\overline{x} ^{\varphi(b^{-1})})\; . \]
We show that $g_i(\overline{x}^{\varphi(b^{-1})}) = g_{i+1}(\overline{x})$ for
all $0\leq i\leq t - 1$ where $i$ is taken modulo $t$.  Indeed,
\[ g_i(\overline{x}^{\varphi(b{-1})}) = \prod_{v\not= i}
(\overline{x}^{k^{t-1}} - \zeta^{k^v})\left/\right. \prod_{v\not= i}
(\zeta^{k^i} - \zeta^{k^v})\; . \]
Clearly $g_i(x^{\varphi(b{-1})})$ vanishes for all $\zeta^{k^v}$ except
$\zeta^{k^{i+1}}$ when its value is $1$.  The same holds for
$g_{i+1}(x)$ and so we have $bh_i(\overline{x}) = h_{i+1}(\overline{x})$
provided $1\leq i\leq t - 2$.  Moreover, $bh_{t-1}(\overline{x}) =
z(\overline{x})z(\overline{x}^k)\cdots
z(\overline{x}^{k^{t-1}})g_{t-1}(\overline{x}^{\varphi(b^{-1})}) =
\eta^ch_0(\overline{x})$.  Therefore, as claimed, $M^L$ affords the same
matrix representation as the original one.
\end{proof}
\noindent
{\bf Remark: } We have an algorithm for obtaining the $F$-representation
from the given $F(\zeta)$-representation (once we have found $z(x)!$),
namely:  the matrix for $a$ is the companion matrix of $f$; the $(i +
1)^{st}$ column of the matrix for $b$ is computed as follows:  write
\[ z(x)x^{ik^{t-1}} = f(x)g(x) + r(x)\]
where $\deg \, r(x) <  \deg \,  f(x)$.  Then
the column vector formed by the coefficients of $r(x)$ (coefficient of
constant term first) is the $(i+1)^{st}$ column of the matrix for $b$.
\begin{definition}
Let $L$ be an extension field of $F$ and let $\rho$ be an
$L$-representation of a group $G$.  We say $\rho$ is {\it completely
realizable} over $F$ if $\rho$ is equivalent to an $F$-representation of
$G$, each of whose irreducible components over $L$ is realizable over
$F$.
\end{definition}
\begin{corollary}
Let the notation be as in Theorem 4.4 and let $\rho$ be the
representation $\langle a\rangle$ defined by $\rho(a) = \zeta$.  Then
$\rho^G$ is completely realizable over $F (= \mathbb{F}_q)$ if and only
if $q\equiv k^j(m)$ for some $j$ and $q\equiv 1(r)$.
\end{corollary}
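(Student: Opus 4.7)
The plan is to reduce the question to realizability of each irreducible component of $\rho^G$ and then appeal to Theorem~4.5 together with Proposition~2.1.

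First I would invoke Proposition~2.1(2) to write $\rho^G \approx \rho_0 \dot+ \rho_1 \dot+ \cdots \dot+ \rho_{r-1}$ over $\overline{\mathbb{F}_s}$, where each $\rho_i$ is irreducible with $\rho_i(a) = \mathrm{diag}(\zeta,\zeta^k,\dots,\zeta^{k^{t-1}})$ and $\rho_i(b)$ the $t\times t$ cyclic shift matrix whose upper-right entry is $\eta^{-i}$, and $\eta$ a primitive $r^{th}$ root of unity. By Definition~4.5 (complete realizability), $\rho^G$ is completely realizable over $F$ if and only if every $\rho_i$ is individually realizable over $F$.

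For the necessity, assume each $\rho_i$ is realizable over $F=\mathbb{F}_q$. The characteristic polynomial of $\rho_0(a)$ is $\prod_{j=0}^{t-1}(X-\zeta^{k^j})$; as a similarity invariant it lies in $F[X]$, so the Frobenius $x\mapsto x^q$ permutes $\mathscr{P}=\{\zeta,\zeta^k,\dots,\zeta^{k^{t-1}}\}$, and Lemma~2.2 gives $q\equiv k^j \pmod m$ for some $j$. Next, computing $\det\rho_1(b) = \pm\eta^{-1}$ (expand the shift matrix along the top row) and using that the determinant is invariant under similarity, we find $\eta^{-1}\in F$, hence $\eta\in F$, i.e.\ $q\equiv 1\pmod r$.

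For the sufficiency, assume $q\equiv k^j\pmod m$ and $q\equiv 1\pmod r$. The second congruence implies $F$ contains a primitive $r^{th}$ root of unity, so the hypothesis of Theorem~4.5 is met. Applying that theorem with $c=-i \pmod r$ for each $i\in\{0,1,\dots,r-1\}$, we obtain an explicit realization of $\rho_i$ over $F$. Taking the direct sum of these realizations produces an $F$-representation equivalent to $\rho^G$ whose irreducible components over $L=F(\zeta)$ are precisely $\rho_0,\dots,\rho_{r-1}$, each realizable over $F$. Hence $\rho^G$ is completely realizable over $F$.

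The main obstacle is not computational but conceptual: one must be careful to extract $\eta\in F$ from the necessity side, since without a component of the form $\rho_i$ with $\gcd(i,r)=1$ one could only conclude that some power of $\eta$ lies in $F$. Using $\rho_1$ and the determinant argument circumvents this cleanly; alternatively, letting $i$ range over all residues mod $r$ forces $\eta^{-i}\in F$ for every $i$, which again gives $\eta\in F$. Everything else is an application of the machinery already in place.
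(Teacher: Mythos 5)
Your proof is correct and follows essentially the same route as the paper: decompose $\rho^G$ by Proposition~2.1(2), get sufficiency by applying Theorem~4.5 to each $\rho_i$ once $\eta\in F$, and get necessity by reading off $q\equiv k^j\pmod m$ from the characteristic polynomial of $\rho_0(a)$ and $\eta\in F$ from an invariant of $\rho_1(b)$. The only cosmetic difference is that you use $\det\rho_1(b)=\pm\eta^{-1}$ where the paper reads $\eta^{-1}$ off the constant term of the characteristic polynomial $x^t-\eta^{-1}$ of $\rho_1(b)$; these are the same observation.
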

\begin{proof}
By Proposition 2.1, $\rho^G$ is a sum of representations $\rho_i$ of the
form dealt with in Theorem 4.5.  The sufficiency is thus established.
For the necessity we observe first that if $\rho^G$ is completely
realizable over $F$, then $F$ must contain a primitive $r^{th}$ root of
unity since (using the notation of Proposition 2.1 (2)) the
characteristic polynomial of $\rho_1(b)$ is $x^t - \eta^{-1}$.  In
addition the characteristic polynomial of $\rho_1(a)$ remains invariant
under the Frobenius automorphism $\tau : y\to y^q$ and so $\tau$ must
permute the elements of the set $\{\zeta,\zeta^k,\dots ,\zeta^{k^{t-1}}\}$
and so $q\equiv k^j(m)$.
\end{proof}

We finish by applying the results obtained to compute a specific
example.
\begin{example}
Let $G = \langle a,b\mid a^5 = 1 = b^8$, $b^{-1}ab = a^2\rangle$, $s = q =
19$. In this case $|2|_5 = 4$ and so $t = 4, r = 2$.  Also $f = x^4 + x^3 +
x^2 + x + 1$.  By the general theory, $\rho^G$ is that direct sum of the two
irreducible representations $\rho_1$ and $\rho_2$ defined by
\[ \rho_1(a) = \left(\begin{array}{cccc}
\zeta & 0 & 0 & 0 \\
0 & \zeta^2 & 0 & 0 \\
0 & 0 & \zeta^4 & 0 \\
0 & 0 & 0 & \zeta^3 \end{array}\right)\; ,\quad \rho_1(b) =
\left(\begin{array}{cccc}
0 & 0 & 0 & 1 \\
1 & 0 & 0 & 0 \\
0 & 1 & 0 & 0 \\
0 & 0 & 1 & 0 \end{array}\right) \]
and
\[ \rho_2(a) = \left(\begin{array}{cccc}
\zeta & 0 & 0 & 0 \\
0 & \zeta^2 & 0 & 0 \\
0 & 0 & \zeta^4 & 0 \\
0 & 0 & 0 & \zeta^3 \end{array}\right)\; ,\quad \rho_2(b) =
\left(\begin{array}{cccr}
0 & 0 & 0 & -1 \\
1 & 0 & 0 & 0 \\
0 & 1 & 0 & 0 \\
0 & 0 & 1 & 0 \end{array}\right)\; . \]
We first compute a matrix representation $\widehat{\rho}_1$ over
$\mathbb{F}_{19}$ equivalent to $\rho_1$ using the algorithm established
above.  We know $\widehat{\rho}_1(a) = \left(\begin{array}{cccc}
0 & 0 & 0 & -1 \\
1 & 0 & 0 & -1 \\
0 & 1 & 0 & -1 \\
0 & 0 & 1 & -1 \end{array}\right)\; .$  To compute $\widehat{\rho}_1(b)$
we observe that in this case $k^{t-1} = 2^3 = 8$ and so we must find $1$
mod $f$, $x^8$mod $f$, $x^{16}$mod $f$ and $x^{24}$mod $f$.  Since
$\overline{x}^5 = \overline{1}$ in $\frac{\mathbb{F}_{19}[x]}{(f)}$ we
compute $1$ mod $f$, $x^3$mod $f$ and $x$ mod $f$ and $x^4$mod $f$.  We
get $1, x^3, x, -1 - x - x^2 - x^3$.  Hence
\[ \widehat{\rho}_1(b) = \left(\begin{array}{cccc}
1 & 0 & 0 & -1 \\
0 & 0 & 1 & -1 \\
0 & 0 & 0 & -1 \\
0 & 1 & 0 & -1 \end{array}\right)\; . \]

To compute a matrix representation $\widehat{\rho}_2$ over $\mathbb{F}_{19}$
equivalent to $\rho_2$ we must compute $z(x)$.  We have $\zeta^4 + \zeta^3 +
\zeta^2 + \zeta + 1 = 0$ and so $\zeta^2 + \zeta^{-2} + \zeta + \zeta^{-1} + 1
= 0$.  But $(\zeta + \zeta^{-1})^2 = \zeta^2 + \zeta^{-2} + 2$, whence
$\zeta^2 + \zeta^{-2} = (\zeta + \zeta^{-1})^2 - 2$.  Letting $\omega = \zeta
+ \zeta^{-1}$ we get $\omega^2 + \omega - 1 = 0$.  Hence $\omega =
\frac{-1\pm\sqrt{5}}{2} = \frac{-1\pm 9}{2}$.  Thus $\omega = 4$ or $\omega =
-5$ from which it follows that $x^4 + x^3 + x^2 + x + 1 = (x^2 - 4x + 1)(x^2 +
5x + 1)$. We may assume $\zeta$ is a root of $x^2 - 4x + 1$.  Adopting the
notation established above, we have $f = f_1f_2$, $\widehat{f}_1 = x^2 + 5x +
1$, $\widehat{f}_2 = x^2 - 4x + 1$.

We must find an element $z\in\mathbb{F}_{19}(\zeta)$ whose norm is $-1$.
Let $z = a_0 + a_1\zeta$.  We require $(a_0 + a_1\zeta)(a_0 +
a_1\zeta^{-1}) = -1$, i.e., $a^2_0 + a_0a_1(\zeta + \zeta^{-1}) + a^2_1
= -1$.  But $\zeta + \zeta^{-1} = 4$ and so $a^2_0 + 4a_0a_1 + a^2_1 + 1
= 0$.  Dividing by $a^2_1$ and setting $x = \frac{a_0}{a_1}$ we get $x^2
+ 4x + \frac{a^2_1 + 1}{a^2_1} = 0$.  Solving for $x$ we have $x =
-2\pm\frac{\sqrt{3a^2_1 - 1}}{a_1}$.  Let $a_1 = 2$.  Then $x =
-2\pm\frac{\sqrt{11}}{2} = -2\pm\frac{7}{2} = -2\pm 6$.  Hence $x = -8$
or $x = 4$.  Taking $\frac{a_0}{2} = -8$ we get $a_0 = 3$.  Hence $z = 3
+ 2\zeta$ and so $z_1(x) = 2x + 3$.  A routine computation establishes
that
\[ 1 = (2x - 8)(x^2 + 5x + 1) +(-2x + 9)(x^2 - 4x + 1)\; . \]
Hence $h_1(x) = (2x - 8)$ and $h_2(x) = -2x + 9$.  It follows that $z(x)
= (2x - 8)(x^2 + 5x + 1)(2x + 3) + (-2x + 9)(x^2 - 4x + 1)$.  Taking
$z(x)$ modulo $f$ (by abuse of notation) we have $z(x) = 4x^3 - x$.

Once again, $\widehat{\rho}_2(a) = \left(\begin{array}{cccc}
0 & 0 & 0 & -1 \\
1 & 0 & 0 & -1 \\
0 & 1 & 0 & -1 \\
0 & 0 & 1 & -1 \end{array}\right)\; . $ Now $b \cdot 1 = 4\overline{x}^3 -
\overline{x}$; $b\overline{x} = (4\overline{x}^3 -
\overline{x})\overline{x}^8$; $b\overline{x}^2 = (4\overline{x}^3 -
\overline{x})\overline{x}^{16}$; $b\overline{x}^3 = (4\overline{x}^3 -
\overline{x})\overline{x}^{24}$.  Reducing modulo $f$ we get $b\cdot 1 =
4\overline{x}^3 - \overline{x}$; $b\overline{x} = \overline{x}^3 +
\overline{x}^2 + 5\overline{x} + 1$; $b\overline{x}^2 = -4\overline{x}^3-
5\overline{x}^2 - 4\overline{x} - 4$; $b\overline{x}^3 = 4\overline{x}^2 - 1$.
Thus
\[ b\to \left(\begin{array}{rrrr}
0 & 1 & -4 & -1 \\
-1 & 5 & -4 & 0 \\
0 & 1 & -5 & 4 \\
4 & 1 & -4 & 0 \end{array}\right)\; . \]
\end{example}
{\bf Remark. } We have obtained necessary and sufficient conditions for the
realizability of $\rho^G$ over $\mathbb{F}_q$ in the case that $|b| = |k|_m$.
Furthermore, in the case when $|b|$ does not necessarily coincide with
$|k|_m$ and $q\equiv 1$ (mod $r$), we have given the necessary and
sufficient conditions for the complete realizability of $\rho^G$ over
$\mathbb{F}_q$.  There remains the problem of the mere realizability of
$\rho^G$ over $\mathbb{F}_q$ when $|b|\not= |k|_m$.  We observe in fact that
the condition $q\equiv k^j(m)$ for some $j$ is, even in this case, a necessary
and sufficient condition for the realizability of $\rho^G$ over $\mathbb{F}_q$
provided $(q,|G|) = 1$.  (Observe that throughout we are tacitly assuming that
$(m,q) = 1$ so that to require $(q,|G|) = 1$ we need only assume $(n,q) = 1$.)
Indeed the necessity follows from looking at the characteristic polynomial of
$\rho^G(a)$ while the sufficiency follows from the fact that $\rho^G$ is
completely realizable over $\mathbb{F}_q$ by Corollary~4.7. Alternatively one
could possibly obtain the sufficiency from two facts, namely:  (i) if
$q\equiv k^j(m)$, then $tr \rho^G(g)\in\mathbb{F}_q$ for all $g\in G$;  (ii)
the Schur index of a representation over a finite field is $1$ provided
$(q,|G|) = 1$ \cite[Theorem 24.10]{dor}.

Nevertheless, the whole thrust of this paper is to explicitly construct
the representations in question.  This could not be done by merely
appealing to the Schur index.

A number of new interesting problems arise from the paper.  We end the paper by
listing  a few of them

\begin{enumerate}
\item Examine the
case when $(n,q)\not= 1$.

\item Find a reciprocity law for other finite and also algebraic groups.

\item Extend reciprocity laws to cover fields which are not necessarily
finite.

\item  Find further applications to and relations with number-theoretic
reciprocity laws.

\end{enumerate}

\noindent
{\bf Acknowledgements.} We thank Franz Lemmermeyer for his interest in
this paper, and we thank the referee for his careful reading of our paper
and for his valuable suggestions which helped us to improve our
exposition.  In particular we added the referee's alternate, nice proof
of Lemma~2.3.

\end{document}